\newtheorem{theorem}{Theorem}
\newtheorem{lemma}[theorem]{Lemma}
\newtheorem{cor}[theorem]{Corollary}
\newtheorem{prop}[theorem]{Proposition}
\theoremstyle{definition}
\newtheorem{df}[theorem]{Definition}
\newtheorem{rem}[theorem]{Remark}
\newtheorem{ex}[theorem]{Example}
\newtheorem{conj}[theorem]{Conjecture}
\DeclareMathOperator{\ch}{\mathrm{ch}}
\definecolor{cadmiumorange}{rgb}{0.93, 0.53, 0.18}
	\definecolor{darkorange}{rgb}{1.0, 0.35, 0.0}
\date{\today}
\title{Ramanujan sums and  rectangular power sums}
\author{John Shareshian and Sheila Sundaram}
\address{
Department of Mathematics,
Washington University, 
St. Louis, MO 63130}
\email{jshareshian@wustl.edu}
\address{School of Mathematics, University of Minnesota, Minneapolis, MN 55455}
\email{shsund@umn.edu}
\begin{document}
\subjclass{05E10, 20C10, 11A25.}
\keywords{arithmetic function, Foulkes representation, power sum, Ramanujan sum, Schur positivity,   von Sterneck function.}

\begin{abstract} For a fixed nonnegative integer $u$ and positive integer $n$, we investigate the symmetric function 
\[\sum_{d|n} \left(c_d(\tfrac{n}{d})\right)^u  p_d^{\tfrac{n}{d}},\]
  where $p_n$ denotes the $n$th power sum symmetric function, and $c_d(r)$ is a \emph{Ramanujan sum}, equal to the sum of the $r$th powers of all the primitive $d$th roots of unity.   We establish the Schur positivity of these functions for $u=0$ and $u=1$, showing that, in each case, the associated representation of the symmetric group $\mathfrak{S}_n$ decomposes into a sum of Foulkes representations, that is, representations induced from the irreducibles of the cyclic subgroup generated by the long cycle.   We also   conjecture Schur positivity for the case $u= 2$.
\end{abstract}

\maketitle

\section{Introduction} 

Let $p_n$ be the $n$th power sum symmetric function. 
This paper is motivated by a question of Fr\'ed\'eric Chapoton, who asked in \cite{Chapoton2021}
 whether the sum $\sum_{d|n} p_d^{\tfrac{n}{d}}$ is Schur positive.  Equivalently, one wants to know if this sum is the Frobenius characteristic of a true representation of the symmetric group $\mathfrak{S}_n$ on $n$ letters.

We answer this question in the affirmative in Theorem~\ref{thm:Chapoton-Shareshian}, thereby augmenting the list of known Schur positive  multiplicity-free sums of power sums, see \cite[Theorems~3.6, 3.11, 3.12]{SuJCTA2021} and \cite[pp.40-41]{SuElecJComb2019}. We also establish the Schur positivity of the following variation:
\[\sum_{d|n} c_d(\tfrac{n}{d}) p_d^{\tfrac{n}{d}}\]
where $c_d(r)$ is a \emph{Ramanujan sum}, defined to be equal to the sum of the $r$th powers of all the primitive $d$th roots of unity  \cite{Ramanujan1918}, \cite{HW1979}, \cite{Knopf1975}, \cite{McCarthy1986}.  We do this by showing that the associated representation of $\mathfrak{S}_n$ decomposes into a direct sum of copies of certain well-studied representations, namely the irreducible representations of   the cyclic subgroup generated by the long cycle $(12\cdots n)$, induced up to $\mathfrak{S}_n$.  We call the latter representations the \emph{Foulkes representations}, see Theorem~\ref{thm:Foulkes} below.

Our main tool is a result, see Theorem~\ref{thm:FowlerGarciaKaraali}, from a paper of Fowler, Garcia and Karaali \cite{FGK2014}, which studies  Ramanujan sums as supercharacters of the cyclic group. Many identities satisfied by the Ramanujan sums, both new and classical, are derived in \cite{FGK2014} using the  framework of supercharacters. 

Section~\ref{sec:background} collects the preliminary facts about arithmetic functions  from  \cite{FGK2014} and the literature,   as well as background information on symmetric functions. 

\section{Background}\label{sec:background}
\subsection{Some classical arithmetic functions}

Let $\phi(d)$ be Euler's totient function,  $\mu(d)$ the number-theoretic M\"obius function, and $\tau(d)$ the divisor function, the number of divisors of $d$.  Also write $(d,r)$ for the greatest common divisor of $d$ and $r$. 

\begin{df}  For  positive  integers $r$ and  $d$, define the Ramanujan sum $c_d(r)$ to be the sum of the $r$th powers of all the primitive $d$th roots of unity, 
\[c_d(r):=\sum_{1\le m\le d: (m,d)=1} \exp(\tfrac{2imr\pi}{d})\]
\end{df}

Ramanujan sums are integers, see e.g. \cite{HW1979}, and have various equivalent formulas:

\[c_d({r})=\sum_{\substack{m\\m|(d,{r})}} m\mu(\tfrac{d}{m}) = \sum_{\substack{m=1\\(m,d)=1}}^{d} \exp({r}\tfrac{2im\pi}{d}) = \sum_{\substack{m=1\\(m,d)=1}}^{d} \cos({r}\tfrac{2im\pi}{d}) \]

The \emph{von Sterneck function} is defined to be the integer $\phi(d) \tfrac{\mu(\frac{d}{(d, r)})} {\phi(\frac{d}{(d, r)})}$, see \cite{HW1979}, \cite{FGK2014}.

There is one more expression for the Ramanujan sum that is particularly relevant for us.  
The following is often attributed to H\"older, though an equivalent identity was given earlier by Kluyver \cite[p. 410]{Kluyver1906}. See \cite[p. 221]{FGK2014} for historical discussion, and also \cite{HW1979}.

\begin{theorem}\label{thm:Kluyver1906}  The Ramanujan sum is an integer, equal to the von Sterneck function:
 \[c_d(r)= \dfrac{\phi(d)}{\phi(\tfrac{d}{(d, r)})}  \mu(\tfrac{d}{(d, r)}).\]

\end{theorem}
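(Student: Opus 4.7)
The plan is to take the Möbius-sum expression $c_d(r) = \sum_{m \mid (d,r)} m\,\mu(d/m)$, which already appears among the equivalent formulas listed in the excerpt, and verify its equality with the von Sterneck function by reducing to the case $d = p^k$ via multiplicativity in $d$ with $r$ held fixed.

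First I would show that both sides are multiplicative as functions of $d$. If $d = d_1 d_2$ with $(d_1, d_2) = 1$, then $(d,r) = (d_1, r)(d_2, r)$ and $d/(d,r) = \bigl(d_1/(d_1,r)\bigr)\bigl(d_2/(d_2,r)\bigr)$, each a factorization into coprime pieces; combined with multiplicativity of $\phi$ and $\mu$, this splits the von Sterneck expression as a product. For the Möbius sum, every divisor $m$ of $(d,r)$ factors uniquely as $m_1 m_2$ with $m_i \mid (d_i, r)$, and the summand factors accordingly as $m_1 \mu(d_1/m_1) \cdot m_2\mu(d_2/m_2)$, yielding multiplicativity of this side too.

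It then remains to verify the identity on a prime power $d = p^k$. Setting $\ell = \min(k, v_p(r))$, one has $(d,r) = p^\ell$ and $d/(d,r) = p^{k-\ell}$; the left side becomes $\sum_{i=0}^{\ell} p^i \mu(p^{k-i})$, whose only surviving terms correspond to $k - i \in \{0,1\}$. A three-case analysis on $\ell \in \{k,\ k-1,\ \le k-2\}$ yields the values $\phi(p^k)$, $-p^{k-1}$, and $0$ respectively. Substituting into the right side in the same three cases gives $\phi(p^k)\mu(1)/\phi(1) = \phi(p^k)$, $\phi(p^k)\mu(p)/\phi(p) = -p^{k-1}$, and $0$ (since $\mu(p^{k-\ell}) = 0$ once $k-\ell \ge 2$); the values match. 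The only step requiring any real care is the multiplicativity of the von Sterneck expression, whose numerator and denominator both involve the coupled quantity $d/(d,r)$, but this reduces cleanly to the elementary identity $(d_1 d_2, r) = (d_1,r)(d_2,r)$ for $(d_1, d_2) = 1$. Otherwise the proof is routine bookkeeping.
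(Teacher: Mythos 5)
Your argument is correct, but there is nothing in the paper to compare it against: the authors state this identity as a classical result of Kluyver/H\"older and simply cite \cite{Kluyver1906}, \cite{HW1979}, and \cite{FGK2014} rather than proving it. Taking that into account, your proof is a sound, self-contained verification of the standard kind: the reduction $(d_1d_2,r)=(d_1,r)(d_2,r)$ for coprime $d_1,d_2$ does make both sides multiplicative in $d$, and your three-case check at $d=p^k$ with $\ell=\min(k,v_p(r))$ correctly produces $\phi(p^k)$, $-p^{k-1}$, and $0$ on both sides (matching Corollary~\ref{cor:prime-powers}). The one dependency worth flagging is that you start from the expression $c_d(r)=\sum_{m\mid(d,r)}m\,\mu(d/m)$, which the paper also lists without proof among the ``equivalent formulas''; a fully self-contained argument would first derive that identity from the root-of-unity definition (e.g.\ by noting $\sum_{m\mid d}c_m(r)$ is a full geometric sum equal to $d$ or $0$ according to whether $d\mid r$, then applying M\"obius inversion). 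Since integrality of $c_d(r)$ also rests on that same formula, making this step explicit would tighten the claim that the Ramanujan sum ``is an integer.'' Otherwise the bookkeeping is complete and the proof stands.
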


In particular, we have the well-known special cases 
\begin{equation}\label{eqn:SpecCasephimu}
c_d(r)
=\begin{cases} \mu(d), \quad (d,r)=1,\\
                         \phi(d), \quad d|r.
 \end{cases}
\end{equation}

\begin{cor}\label{cor:Factorisation} (See \cite[Theorem~3.3]{FGK2014})
If $m,n,x,y$ are positive integers with 
$mx, ny$ relatively prime, then \[c_{mn}(xy)=c_m(x) c_n(y).\]
\end{cor}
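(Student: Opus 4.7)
The plan is to apply the H\"older/von Sterneck formula (Theorem~\ref{thm:Kluyver1906}) to each of the three Ramanujan sums and reduce everything to the multiplicativity of $\phi$ and $\mu$ on coprime arguments.

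First I would unpack the hypothesis $(mx,ny)=1$ into the four coprimality relations it implies: $(m,n)=(m,y)=(x,n)=(x,y)=1$. The first of these will let me factor products like $\phi(mn)=\phi(m)\phi(n)$, while the other three are exactly what is needed to reduce $\gcd$'s involving cross terms.

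Next I would compute the gcd that appears in the von Sterneck formula for $c_{mn}(xy)$, namely $(mn,xy)$. Using $(m,n)=1$ we get $(mn,xy)=(m,xy)(n,xy)$, and then $(m,xy)=(m,x)(m,y)=(m,x)$ and $(n,xy)=(n,y)$, so $(mn,xy)=(m,x)(n,y)$. Consequently
\[
\frac{mn}{(mn,xy)} = \frac{m}{(m,x)}\cdot \frac{n}{(n,y)},
\]
and the two factors on the right are coprime (they divide $m$ and $n$ respectively). By multiplicativity,
\[
\phi\!\left(\tfrac{mn}{(mn,xy)}\right) = \phi\!\left(\tfrac{m}{(m,x)}\right)\phi\!\left(\tfrac{n}{(n,y)}\right),\qquad
\mu\!\left(\tfrac{mn}{(mn,xy)}\right) = \mu\!\left(\tfrac{m}{(m,x)}\right)\mu\!\left(\tfrac{n}{(n,y)}\right),
\]
and $\phi(mn)=\phi(m)\phi(n)$.

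Finally I would plug these factorizations into
\[
c_{mn}(xy) = \frac{\phi(mn)}{\phi(mn/(mn,xy))}\,\mu\!\left(\tfrac{mn}{(mn,xy)}\right)
\]
and regroup the $m$-factors and $n$-factors to recognize $c_m(x)c_n(y)$ on the right. There is no real obstacle here; the only thing to be careful about is verifying that $m/(m,x)$ and $n/(n,y)$ are coprime so that both $\phi$ and $\mu$ split, but this is immediate from $(m,n)=1$. The argument is essentially bookkeeping once the von Sterneck formula is invoked.
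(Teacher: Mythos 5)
Your proof is correct, and it follows exactly the route the paper intends: the result is stated as a corollary of the H\"older/von Sterneck formula (Theorem~\ref{thm:Kluyver1906}), with the paper deferring the details to \cite[Theorem~3.3]{FGK2014}. Your gcd bookkeeping --- $(mn,xy)=(m,x)(n,y)$ via the four coprimality relations, followed by multiplicativity of $\phi$ and $\mu$ on the coprime factors $m/(m,x)$ and $n/(n,y)$ --- is precisely the standard ``direct computation'' that justifies the statement.
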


Direct computation from Theorem~\ref{thm:Kluyver1906} gives 
\begin{cor}\label{cor:prime-powers} Let $q$ be prime,  $a, n$ nonnegative integers.  Then 
\[c_{q^a}({n})=\begin{cases}   \phi(q^a), & q^a|n,\\
                                                   -q^{a-1}, & q^{a-1} |n, q^a\!\!\not|\, n,\\
                                                     0, &\text{otherwise} .
\end{cases}\]
\end{cor}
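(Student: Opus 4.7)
The plan is to apply the H\"older formula (Theorem~\ref{thm:Kluyver1906}) directly with $d = q^a$, then carry out a short case analysis on the $q$-adic valuation of $n$. Writing $v_q(n)$ for the largest nonnegative integer $v$ with $q^v \mid n$, and using the fact that every divisor of $q^a$ is a power of $q$, I would set $b := \min(a,\, v_q(n))$ so that $(q^a, n) = q^b$ and therefore $q^a/(q^a, n) = q^{a-b}$. Substitution into Theorem~\ref{thm:Kluyver1906} then collapses the Ramanujan sum to the compact expression
\[
c_{q^a}(n) \;=\; \phi(q^a)\,\frac{\mu(q^{a-b})}{\phi(q^{a-b})}.
\]

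The three cases of the statement correspond exactly to $a - b = 0$, $a - b = 1$, and $a - b \ge 2$. When $a - b = 0$, equivalently $q^a \mid n$, the fractions collapse to $\mu(1)/\phi(1) = 1$ and the value is $\phi(q^a)$. When $a - b = 1$, equivalently $q^{a-1}\mid n$ but $q^a \nmid n$, I would plug in $\mu(q) = -1$, $\phi(q) = q-1$, and $\phi(q^a) = q^{a-1}(q-1)$ to obtain $-q^{a-1}$. When $a - b \ge 2$, the integer $q^{a-b}$ is divisible by $q^2$ and hence not squarefree, so $\mu(q^{a-b}) = 0$ and the Ramanujan sum vanishes.

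There is no real obstacle here beyond bookkeeping; the only borderline to double-check is the degenerate case $a = 0$, where $q^a = 1$ divides every $n$, the statement falls into its first case, and the formula correctly reports $c_1(n) = \phi(1) = 1$. Everything else is a mechanical evaluation of $\mu$ and $\phi$ on $1$, $q$, and powers of $q$.
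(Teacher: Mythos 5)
Your proposal is correct and matches the paper's approach exactly: the paper derives Corollary~\ref{cor:prime-powers} by ``direct computation from Theorem~\ref{thm:Kluyver1906},'' which is precisely the substitution $d=q^a$, $(q^a,n)=q^{\min(a,v_q(n))}$, and three-way case analysis on $a-b$ that you carry out. All your evaluations of $\mu$ and $\phi$ check out, including the degenerate case $a=0$.
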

We observe the following about the diagonal Ramanujan sum $c_d(\tfrac{n}{d})$.
\begin{prop}\label{prop:Ramsum0pm1} Fix $n\ge 1$. The Ramanujan sum  $c_d(\tfrac{n}{d})$  takes on the values $\{0, \pm 1\}$ for every divisor $d$ of $n$ if and only if $n$ is square-free or 4 times an odd square-free integer. Furthermore, when $n$ is square-free,  
$c_d(\tfrac{n}{d})=\mu(d)=\pm 1$ for every divisor $d$ of $n$.  When $n$ is 4 times an odd square-free integer,  
 $|c_d(\tfrac{n}{d})|=1$ for every square-free divisor of $n$; it is zero if $4|d$.
\end{prop}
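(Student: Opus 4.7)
The plan is to exploit the multiplicativity of the Ramanujan sum to reduce the problem to a prime-power analysis. Writing the prime factorization $n = q_1^{a_1}\cdots q_k^{a_k}$, an arbitrary divisor takes the form $d = q_1^{b_1}\cdots q_k^{b_k}$ with $0 \le b_i \le a_i$. Since the factors $q_i^{b_i} \cdot q_i^{a_i-b_i} = q_i^{a_i}$ are pairwise coprime across $i$, iterating Corollary~\ref{cor:Factorisation} yields
\[
c_d(n/d) = \prod_{i=1}^{k} c_{q_i^{b_i}}(q_i^{a_i - b_i}).
\]
Corollary~\ref{cor:prime-powers} then pins down each factor: $c_{q^b}(q^{a-b})$ equals $\phi(q^b)$ when $a \ge 2b$, equals $-q^{b-1}$ when $a = 2b-1$, and vanishes when $a \le 2b-2$.

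Since setting $b_j = 0$ for $j \ne i$ produces $c_1(q_j^{a_j}) = 1$ at every other index, the requirement $c_d(n/d) \in \{0, \pm 1\}$ for all divisors $d$ is equivalent to $c_{q^b}(q^{a-b}) \in \{0, \pm 1\}$ for each prime $q$ dividing $n$, with $a$ its exponent and $0 \le b \le a$. A short case analysis identifies the admissible exponents:
\begin{itemize}
  \item $a=1$: the two factor values are $1$ and $-1$, both admissible.
  \item $a=2$: the factor values are $1, \phi(q), 0$, forcing $\phi(q)=1$ and hence $q=2$.
  \item $a \ge 3$: taking $b=2$ produces $\phi(q^2) = q(q-1) \ge 2$ when $a \ge 4$ and $-q$ when $a=3$, of absolute value $\ge 2$ in either case. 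Inadmissible.
\end{itemize}
Consequently $c_d(n/d) \in \{0, \pm 1\}$ for all $d \mid n$ exactly when every odd prime divides $n$ at most once and $2$ divides $n$ at most twice; that is, when $n$ is square-free or four times an odd square-free integer.

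For the remaining statements: when $n$ is square-free, every divisor $d$ satisfies $(d, n/d)=1$, so the special case \eqref{eqn:SpecCasephimu} gives $c_d(n/d) = \mu(d) = \pm 1$. When $n = 4m$ with $m$ odd square-free, writing $d = 2^e d'$ with $d' \mid m$ and applying Corollary~\ref{cor:Factorisation} to the coprime pair $(4, m)$ gives $c_d(n/d) = c_{2^e}(2^{2-e}) \cdot c_{d'}(m/d')$. The second factor equals $\mu(d') = \pm 1$ by the square-free case, while the first equals $1$ for $e \in \{0,1\}$ and $0$ for $e=2$ by Corollary~\ref{cor:prime-powers}, delivering the claimed values. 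The only step demanding care is the $a \ge 3$ case of the analysis above---ensuring that some choice of $b$ always produces a factor of absolute value $\ge 2$---but the uniform choice $b=2$ dispatches it without further subtlety.
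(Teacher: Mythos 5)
Your proof is correct and follows essentially the same route as the paper's: both reduce to prime powers via the multiplicativity of Corollary~\ref{cor:Factorisation} and evaluate the resulting factors with Corollary~\ref{cor:prime-powers}. The only difference is organizational --- for the converse the paper exhibits specific witness divisors (a $d$ with $d^2\mid n$, $d>2$, and $d=4$ when $n$ is $8$ times an odd square-free integer), whereas you run a systematic case analysis on the exponent $a$ of each prime; both are complete.
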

\begin{proof}  If $n$ is square-free,  $(d, \tfrac{n}{d})=1$ for every divisor $d$ of $n$ and so 
$c_d(\tfrac{n}{d})=\mu(d)=\pm 1$. 

Now suppose $n=4 k$ where $k$ is an odd square-free integer. Let $d$ be a divisor of $n$.   We use Corollary~\ref{cor:Factorisation}.  There are three possibilities for $d$.

If $d$ is odd, then $d|k$ and we have $c_d(\tfrac{n}{d})=c_1(4) c_d(\tfrac{k}{d}) =\mu(d)=\pm 1,$ since $d$ is square-free.

If $d=2 d'$ for odd $d'$, then $c_d(\tfrac{n}{d})=c_2(2) c_{d'}(\tfrac{k}{d'}) =\phi(2)\mu(d')=\pm 1,$ similarly.  

If $d=4 d'$ for odd $d'$, then $c_d(\tfrac{n}{d})=c_4(1)c_{d'}(\tfrac{k}{d'}) =\mu(4)=0$. 

On the other hand, suppose $n=d^2 k$ has a perfect square divisor $d^2$, $d>1$.  Then  $c_d(dk)=\phi(d)$, and this is greater than 1 if  $d>2$, and equal to 1 if $d=2$. 
It remains to consider the case  $n=4 k$ where $k$ is square-free.  If $k$ is even,  say $k=2k'$ where $k'$ is odd and square-free, then by Corollary~\ref{cor:Factorisation} and Theorem~\ref{thm:Kluyver1906}, 
$$c_4(\tfrac{n}{4})=c_4(k)=c_4(2) c_1(k')=\frac{\phi(4)}{\phi(2)}\mu(2)=-2.$$

We have shown that $|c_d(\tfrac{n}{d})|>1$ if $n$ has a perfect square divisor greater than 4, or if $n$ is 8 times an odd square-free integer.  
This completes the proof.
\end{proof}
\begin{prop}\label{prop:Ramsum-max} Fix $n\ge 1$. Then $|c_d(r)|\le \phi(d)$ with equality if and only if one of the following holds:
\begin{enumerate} \item $r=k d$ for some integer $k\ge 1$ or \item $d$ is even and $r= (2k-1)\frac{d}{2}$ for some integer $k\ge 1$. \end{enumerate}

\begin{proof} From Theorem~\ref{thm:Kluyver1906} the first inequality is clear.  To see when the maximum is attained, note that we must have 
$\phi(\tfrac{d}{(d, r)})=1$, which implies $\tfrac{d}{(d, r)}=1$ or $\tfrac{d}{(d, r)}=2$, that is, $(d, r)=d$ or $(d, r)=\frac{d}{2}, \text{ with $d$ even}.$ Since $\mu(\tfrac{d}{(d, r)})\ne 0$ in each case, the result follows.
\end{proof}
\end{prop}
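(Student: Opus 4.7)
The plan is to read everything off of the Hölder/Kluyver identity in Theorem~\ref{thm:Kluyver1906}, which already packages $c_d(r)$ as a product of the three ingredients we need to control: $\phi(d)$, a reciprocal totient, and a Möbius value. Write
\[
c_d(r) = \phi(d)\cdot\frac{\mu(d/(d,r))}{\phi(d/(d,r))}.
\]
Since $|\mu(m)|\le 1$ and $\phi(m)\ge 1$ for every positive integer $m$, the inequality $|c_d(r)|\le \phi(d)$ is immediate.

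For the equality case, I would observe that $|c_d(r)|=\phi(d)$ forces simultaneously $\phi(d/(d,r))=1$ and $\mu(d/(d,r))\ne 0$. The first condition is the more restrictive one: it is a standard fact that $\phi(m)=1$ only when $m\in\{1,2\}$, so let $m:=d/(d,r)$ and split into two cases. If $m=1$, then $(d,r)=d$, which means $d\mid r$, giving conclusion (1); note $\mu(1)=1$, so the second condition is automatic. If $m=2$, then $d$ must be even and $(d,r)=d/2$, so $r$ is a multiple of $d/2$ but not of $d$, i.e., $r=(2k-1)(d/2)$ for some $k\ge 1$; again $\mu(2)=-1\ne 0$, so no extra constraint arises. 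Conversely, in either of these two cases the formula gives $|c_d(r)| = \phi(d)\cdot 1/1=\phi(d)$, confirming equality.

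There is no real obstacle here; the proposition is essentially a one-line consequence of Kluyver's closed form once one knows the classification $\phi(m)=1 \iff m\in\{1,2\}$. The only thing to be careful about is not to forget the non-vanishing of $\mu$, which is why one wants $d/(d,r)$ squarefree; happily, both $1$ and $2$ are squarefree, so the Möbius factor takes care of itself and both necessary conditions collapse into the single condition $\phi(d/(d,r))=1$.
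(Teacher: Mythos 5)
Your proposal is correct and follows essentially the same route as the paper: both read the inequality and the equality cases directly off the Kluyver/H\"older identity, using $\phi(m)=1\iff m\in\{1,2\}$ and noting that $\mu$ does not vanish in either case. Your write-up is simply a more detailed version of the paper's argument, including the easy converse check.
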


Let $\delta_{a,b}$ denote the Kronecker delta; it is nonzero if and only if $a=b,$ in which case it equals 1.
Let $\{d_1, d_2,\ldots , d_{\tau(n)}\}$ be any ordering of the $\tau(n)$ divisors of $n$, and 
let $M_n$ be the matrix whose $(i,j)$ entry is $c_{d_i}(\frac{n}{d_j}).$  We refer to this matrix as the \emph{Ramanujan matrix}. It will play an important role in this paper.   

The following fact is classical, and is rederived using supercharacter theory in \cite[Theorem~3.5, Theorem~3.11]{FGK2014}.  
\begin{prop}\label{prop:Orthogonality1}\cite[Lemma~7.2.2]{Knopf1975} For divisors $d_1, d_2$ of $n$, 
\begin{equation}\frac{1}{n}\sum_{k|n} c_k(\tfrac{n}{d_1}) c_{d_2}(\tfrac{n}{k})=\delta_{d_1, d_2}.\end{equation}
Equivalently, 
\begin{equation}M_n^2=n I_n,\end{equation}
where $I_{n}$ is the $\tau(n)$ by $\tau(n)$ identity matrix.
It follows that the matrix $M_n$ is invertible, and the $(i,j)$-entry of $M_n^{-1}$ is $\frac{1}{n} c_{d_i}(\frac{n}{d_j}).$
\end{prop}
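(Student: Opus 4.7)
The plan is to establish the identity $\sum_{k|n} c_k(\tfrac{n}{d_1}) c_{d_2}(\tfrac{n}{k}) = n\,\delta_{d_1,d_2}$ by expanding one of the Ramanujan sums via its M\"obius formula, exchanging the order of summation, and reducing to a classical auxiliary identity. The equivalent statement $M_n^2 = nI_n$ then follows directly from the definition of $M_n$, and reading off $M_n^{-1}$ is immediate.

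The first step is to establish the auxiliary divisor sum
\[
\sum_{d\mid n} c_d(r) = \begin{cases} n, & n\mid r, \\ 0, & \text{otherwise.}\end{cases}
\]
Using the formula $c_d(r) = \sum_{m\mid (d,r)} m\,\mu(d/m)$ from the excerpt and interchanging summations gives
\[
\sum_{d\mid n} c_d(r) = \sum_{m\mid (n,r)} m \sum_{e\mid n/m}\mu(e),
\]
and the inner M\"obius sum vanishes unless $n/m = 1$, so only the term $m=n$ survives, and only when $n\mid r$.

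Next, I would apply the same M\"obius expansion to $c_{d_2}(\tfrac{n}{k})$ inside the main sum and swap the order of summation over $k$ and $\ell$:
\[
\sum_{k\mid n} c_k(\tfrac{n}{d_1})\, c_{d_2}(\tfrac{n}{k}) \;=\; \sum_{\ell\mid d_2} \ell\,\mu(d_2/\ell) \sum_{\substack{k\mid n \\ \ell\mid n/k}} c_k(\tfrac{n}{d_1}) \;=\; \sum_{\ell\mid d_2} \ell\,\mu(d_2/\ell) \sum_{k\mid n/\ell} c_k(\tfrac{n}{d_1}).
\]
By the auxiliary identity with $n$ replaced by $n/\ell$ and $r = n/d_1$, the inner sum equals $n/\ell$ when $(n/\ell)\mid (n/d_1)$, i.e.\ when $d_1\mid \ell$, and vanishes otherwise. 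Thus the whole expression collapses to
\[
n \sum_{\substack{\ell : \, d_1\mid \ell \mid d_2}} \mu(d_2/\ell).
\]

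Finally, if $d_1\nmid d_2$ this sum is empty and yields $0$. If $d_1\mid d_2$, reparametrizing $\ell = d_1 m$ with $m\mid d_2/d_1$ turns the sum into $\sum_{m\mid d_2/d_1}\mu\!\bigl((d_2/d_1)/m\bigr)$, which is $1$ when $d_2 = d_1$ and $0$ otherwise, giving $n\,\delta_{d_1,d_2}$. The only real obstacle is careful bookkeeping in the two nested M\"obius inversions; once the pivotal auxiliary identity is in hand, the rest is formal.
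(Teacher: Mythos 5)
Your proof is correct. Note that the paper does not actually prove this proposition: it is stated as a classical fact with citations to Knopfmacher's Lemma~7.2.2 and to the supercharacter-theoretic rederivation in \cite{FGK2014}, so there is no internal argument to compare against. What you supply is a self-contained elementary derivation: you first prove the divisor-sum identity $\sum_{d\mid n}c_d(r)=n$ if $n\mid r$ and $0$ otherwise (which is essentially the $d=n$ instance of the spirit of Proposition~\ref{prop:KeyFact}, obtained here directly from the von Sterneck expansion $c_d(r)=\sum_{m\mid(d,r)}m\,\mu(d/m)$), and then feed it into the main sum after expanding $c_{d_2}(\tfrac{n}{k})$ and interchanging summations. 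All the steps check out: the translation of the condition $\ell\mid n/k$ into $k\mid n/\ell$, the equivalence of $(n/\ell)\mid(n/d_1)$ with $d_1\mid\ell$, and the final collapse of $\sum_{d_1\mid\ell\mid d_2}\mu(d_2/\ell)$ to $\delta_{d_1,d_2}$ via the standard M\"obius sum are all valid, and the matrix reformulation $M_n^2=nI_n$ is indeed just a transcription of the scalar identity. Compared with the supercharacter framework of \cite{FGK2014}, your route buys elementary self-containedness at the cost of two nested inversions' worth of bookkeeping, which you handle correctly.
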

Since $\mu(k)=c_k(1)$, the case $d_1=n$ in the orthogonality relation of Proposition~\ref{prop:Orthogonality1} immediately gives:
\begin{prop}\label{prop:KeyFact} \cite{HW1979}, \cite[Corollary 3.13]{FGK2014} If $n\ge 1$ and $d|n,$ then 
\[\sum_{k|n} c_d(\tfrac{n}{k}) \mu(k) = n\delta_{d,n}.\]
\end{prop}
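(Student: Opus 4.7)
The plan is to derive this statement as a direct specialization of the orthogonality relation in Proposition~\ref{prop:Orthogonality1}. The first step is to observe that $\mu(k) = c_k(1)$ for every positive integer $k$: this is immediate from the special-case formulas in equation~(\ref{eqn:SpecCasephimu}), since $(k,1) = 1$ automatically, so the first branch gives $c_k(1) = \mu(k)$.

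Next, I would set $d_1 = n$ and $d_2 = d$ in Proposition~\ref{prop:Orthogonality1}. The factor $c_k(\tfrac{n}{d_1})$ becomes $c_k(\tfrac{n}{n}) = c_k(1) = \mu(k)$, so the orthogonality relation reads
\[\frac{1}{n}\sum_{k|n} \mu(k)\, c_d(\tfrac{n}{k}) = \delta_{n, d}.\]
Multiplying both sides by $n$ produces exactly the claimed identity.

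There is essentially no obstacle here: the entire argument is a one-line specialization of a result already recorded in the paper, and the only minor checks are that the subscripts and arguments match up correctly and that $c_k(1) = \mu(k)$ follows from the tabulated special values. One could alternatively attempt a direct proof by expanding $c_d(\tfrac{n}{k})$ via the Kluyver--H\"older formula of Theorem~\ref{thm:Kluyver1906} and interchanging the order of summation, using standard identities for sums of $\mu$ over divisors; but such an approach would merely re-derive a particular case of the same orthogonality relation, so routing through Proposition~\ref{prop:Orthogonality1} is clearly the most efficient path.
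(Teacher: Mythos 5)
Your proposal is correct and matches the paper's own derivation exactly: the paper likewise notes that $\mu(k)=c_k(1)$ and obtains the identity as the case $d_1=n$ of the orthogonality relation in Proposition~\ref{prop:Orthogonality1}. Nothing further is needed.
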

In the next two propositions we record two more identities for the diagonal Ramanujan sums $c_d(\tfrac{n}{d})$. 
\begin{prop}\label{prop:trM_n}(See \cite[Corollary~3.6]{FGK2014})
Let $n\ge 1.$ Then the trace of the Ramanujan matrix $M_n$ is given by 

\[\sum_{d|n}c_d(\tfrac{n}{d})
=\begin{cases} \sqrt{n}, \text{ if $n$ is a perfect square}, \\
                                                                       0, \text{ otherwise}. \end{cases}\]

\end{prop}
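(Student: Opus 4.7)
My plan is to expand each Ramanujan sum using the classical Möbius identity $c_d(r) = \sum_{m \mid \gcd(d,r)} m\, \mu(d/m)$ (the first formula listed after the definition of $c_d$), interchange the order of summation, and reindex so that the inner sum collapses via $\sum_{e \mid k} \mu(e) = \delta_{k,1}$.

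In detail, I would write
\[
\sum_{d \mid n} c_d\!\left(\tfrac{n}{d}\right) \;=\; \sum_{d \mid n} \sum_{m \mid \gcd(d,\,n/d)} m\, \mu\!\left(\tfrac{d}{m}\right),
\]
and reindex the double sum by the pair $(e,m)$ where $e = d/m$. The three conditions $d \mid n$, $m \mid d$, and $m \mid n/d$ translate into the single condition $em^2 \mid n$ (note that once $em^2 \mid n$ we have automatically $em \mid n$). Thus
\[
\sum_{d \mid n} c_d\!\left(\tfrac{n}{d}\right) \;=\; \sum_{\substack{e,m \ge 1 \\ em^2 \mid n}} m\, \mu(e) \;=\; \sum_{\substack{m \ge 1 \\ m^2 \mid n}} m \sum_{e \mid n/m^2} \mu(e).
\]

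By the fundamental Möbius identity, the inner sum equals $1$ precisely when $n/m^2 = 1$ and vanishes otherwise. Hence the only surviving term is the one with $m = \sqrt{n}$, which exists if and only if $n$ is a perfect square, and contributes $\sqrt{n}$ in that case. This yields the stated formula.

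The computation is essentially routine once the right expansion is chosen; the only subtle step is the reindexing $d = em$, which must be justified by checking that the three divisibility constraints on $(d,m)$ collapse cleanly to the single constraint $em^2 \mid n$. I would expect this to be the main (minor) point where care is needed. An alternative route would be via the supercharacter/orthogonality framework used in \cite{FGK2014}, but the Möbius expansion gives the shortest self-contained argument using only identities already recorded in the paper.
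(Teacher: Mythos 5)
Your argument is correct. The reindexing is the only delicate point, and it checks out: with $d=em$, the three constraints $d\mid n$, $m\mid d$, $m\mid n/d$ are equivalent to the single constraint $em^2\mid n$ (the condition $em\mid n$ being implied), so the double sum becomes $\sum_{m^2\mid n} m\sum_{e\mid n/m^2}\mu(e)$, and the inner sum is $1$ exactly when $n/m^2=1$, leaving the single term $\sqrt{n}$ when $n$ is a square and nothing otherwise. A spot check with $n=8$ (where $c_4(2)=-2$ must cancel the positive terms) confirms the bookkeeping. Note that the paper itself gives no proof of this proposition: it simply cites \cite[Corollary~3.6]{FGK2014}, where the identity is obtained within the supercharacter framework. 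Your M\"obius-expansion argument is therefore not a variant of the paper's proof but a genuinely self-contained elementary replacement for the citation, using only the formula $c_d(r)=\sum_{m\mid (d,r)}m\,\mu(d/m)$ already recorded in Section~2 and the standard identity $\sum_{e\mid N}\mu(e)=\delta_{N,1}$. What the citation buys is uniformity (the same machinery in \cite{FGK2014} yields the row-sum formula of Theorem~\ref{thm:FowlerGarciaKaraali} and the orthogonality relations); what your argument buys is a short, verifiable derivation that keeps the paper self-contained at this point.
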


We will need the following additional computation in Section~\ref{sec:weightedRectangular}.  The identity below may be viewed as a formula for the \emph{signed} trace of the matrix $M_n$. A similar identity appears in \cite[Ex. 2.7, p. 90]{McCarthy1986}.
\begin{prop}\label{prop:signed-trM_n} Let $n\ge 1$ be even. Then 
\[\sum_{d|n}c_d(\tfrac{n}{d}) (-1)^{\tfrac{n}{d}}
=\begin{cases} \sqrt{n}, &\text{ if $n$ is a perfect square}, \\
                        2  \sqrt{n/2},  &\text{ if $n/2$ is an odd perfect square},\\
                          0,               &\text{ otherwise}. \end{cases}\]
\end{prop}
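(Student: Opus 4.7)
The plan is to reduce to Proposition~\ref{prop:trM_n} by isolating the contribution from divisors $d$ with $n/d$ odd. Since $(-1)^{n/d}=1-2$ when $n/d$ is odd and $(-1)^{n/d}=1$ when $n/d$ is even, I write
\[
S := \sum_{d\mid n} c_d(\tfrac{n}{d})(-1)^{n/d} \;=\; A - 2B,
\]
where $A=\sum_{d\mid n}c_d(\tfrac{n}{d})$ is the unsigned trace from Proposition~\ref{prop:trM_n}, and $B=\sum_{d\mid n,\; n/d\text{ odd}}c_d(\tfrac{n}{d})$. Since $A$ is known, the entire proof reduces to evaluating $B$.

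To compute $B$, write $n=2^am$ with $m$ odd and $a\ge 1$. The condition that $n/d$ be odd forces $2^a\mid d$, so $d=2^ad'$ with $d'\mid m$ and $n/d=m/d'$. I apply Corollary~\ref{cor:Factorisation} to the factor pairs $(2^a,1)$ and $(d',m/d')$: the coprimality hypothesis holds because $\gcd(2^a,m)=1$ and $d'\mid m$. Combined with the special case~\eqref{eqn:SpecCasephimu}, this gives
\[
c_{2^ad'}(m/d')=c_{2^a}(1)\,c_{d'}(m/d')=\mu(2^a)\,c_{d'}(m/d').
\]
Summing over $d'\mid m$ yields $B=\mu(2^a)\sum_{d'\mid m}c_{d'}(m/d')$, where the inner sum is the unsigned trace for the odd integer $m$.

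The remainder is a case split on $a$. If $a\ge 2$, then $\mu(2^a)=0$, so $B=0$ and $S=A$; Proposition~\ref{prop:trM_n} then delivers $\sqrt{n}$ if $n$ is a perfect square and $0$ otherwise. If $a=1$, then $\mu(2)=-1$, and Proposition~\ref{prop:trM_n} applied to the odd integer $m=n/2$ gives $B=-\sqrt{m}$ when $m$ is a perfect square and $B=0$ otherwise; moreover $n=2m$ is not a perfect square, so $A=0$ here, giving $S=2\sqrt{n/2}$ when $n/2$ is an odd perfect square and $S=0$ otherwise. These two cases together match the three cases of the stated piecewise formula, because the middle case ``$n/2$ an odd perfect square'' is precisely the sub-case $a=1$ with $m$ a square, while the ``perfect square'' case of the claim arises only when $a\ge 2$ and is even.

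There is no real obstacle beyond bookkeeping: the one point requiring care is checking the coprimality hypothesis of Corollary~\ref{cor:Factorisation}, which is immediate from $m$ odd and $d'\mid m$; after that, the vanishing $\mu(2^a)=0$ for $a\ge 2$ does almost all the work.
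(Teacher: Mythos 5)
Your proof is correct and follows essentially the same route as the paper's: the same decomposition $S=A-2B$ isolating the divisors $d$ with $n/d$ odd, followed by the same case split on the exponent $a$ of $2$ in $n$. The only difference is that you evaluate $c_{2^a d'}(m/d')$ via the multiplicativity of Ramanujan sums (Corollary~\ref{cor:Factorisation}) together with $c_{2^a}(1)=\mu(2^a)$, whereas the paper computes the same quantity directly from the von Sterneck formula of Theorem~\ref{thm:Kluyver1906}; this is a minor streamlining, not a different argument.
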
 

\begin{proof}
Note that we have, for any $n$, 
\[\sum_{d|n}c_d(\tfrac{n}{d})=\sum_{\substack{d|n\\ \tfrac{n}{d} \text{ even}}} c_d(\tfrac{n}{d}) + \sum_{\substack{d|n\\ \tfrac{n}{d} \text{ odd}}} c_d(\tfrac{n}{d}),\]
and 
\[\sum_{d|n}c_d(\tfrac{n}{d}) (-1)^{\frac{n}{d}}=\sum_{\substack{d|n\\ \tfrac{n}{d} \text{ even}}} c_d(\tfrac{n}{d}) - \sum_{\substack{d|n\\ \tfrac{n}{d} \text{ odd}}} c_d(\tfrac{n}{d}).\]
It follows that, for arbitrary $n$, 
\begin{equation}\label{eqn:signed-Ramsum}
\sum_{d|n}c_d(\tfrac{n}{d}) (-1)^{\frac{n}{d}}- \sum_{d|n}c_d(\tfrac{n}{d})
= -2  \sum_{\substack{d|n\\ \tfrac{n}{d} \text{ odd}}} c_d(\tfrac{n}{d}).
\end{equation}

Now assume $n=2^a b$ is even, with $b$ odd, so that $a\ge1$.

We have
\begin{eqnarray*}
\sum_{d|n}(-1)^{n/d}c_d(\tfrac{n}{d}) &=  &\sum_{d|n}c_d(\tfrac{n}{d})-2\sum_{2^a|d|n}c_d(\tfrac{n}{d}) \\ 
&= & \left\{ \begin{array}{ll} \sqrt{n}-2\sum_{2^a|d|n}c_d(\tfrac{n}{d}), & n \mbox{ a perfect square}, \\ -2\sum_{2^a|d|n}c_d(\tfrac{n}{d}) &\mbox{otherwise,} \end{array} \right.
\end{eqnarray*}
the last equality following from Proposition~\ref{prop:trM_n}.

Say $2^a|d|n$.  So, $d=2^af$ for some divisor $f$ of $b$.  Now $ \phi(d)=2^{a-1}\phi(f)$
by the multiplicativity of $\phi$, and $ (d,\tfrac{n}{d})=(f,\tfrac{b}{f})$ 
since $\tfrac{n}{d}$ is odd.   Hence
$$
\phi\left(\frac{d}{(d,\tfrac{n}{d})}\right)=\phi\left(2^a\frac{f}{(f,\tfrac{b}{f})}\right)=2^{a-1}\phi \left(\frac{f}{(f,\tfrac{b}{f})}\right),
$$ 
and
$$
\mu\left(\frac{d}{(d,\tfrac{n}{d})}\right)=\mu\left(2^a\frac{f}{(f,\tfrac{b}{f})}\right)=\left\{ \begin{array}{ll} -\mu\left(\frac{f}{(f,\tfrac{b}{f})}\right), & a=1, \\ 0 & \mbox{otherwise}, \end{array} \right.
$$ 
by the definition of the M\"obius function.  Using Proposition~\ref{prop:trM_n} again, we conclude that if $a=1$ then
\begin{eqnarray*}
\sum_{2^a|d|n}c_d(\tfrac{n}{d}) & = & -\sum_{f|b}\frac{\phi(f)}{\phi(\frac{f}{(f,\tfrac{b}{f})})}\mu(\frac{f}{(f,\tfrac{b}{f})}) \\ & =  & -\sum_{f|b}c_f(\tfrac{b}{f}) \\ & = & \left\{ \begin{array}{ll} -\sqrt{b}, & b=n/2 \mbox{ a perfect square}, \\ 0 & \mbox{otherwise}. \end{array} \right.
\end{eqnarray*}
On the other hand, if $a>1$ then 
$\sum_{2^a|d|n}c_d(\tfrac{n}{d})=0, $ which completes the proof. 
\end{proof}

The following special case of one of the main results of the paper \cite{FGK2014} gives an explicit formula for the row sums of the Ramanujan matrix $M_n$.
\begin{theorem}\label{thm:FowlerGarciaKaraali} \cite[Theorem 4.5, the case $s=1$]{FGK2014} Let $n=\prod_{i=1}^r p_i^{\alpha_i}$ be the factorisation of $n$ into powers of distinct primes, and let $d=\prod_{i=1}^r p_i^{\beta_i}$ be a divisor of $n,$ so that $0\le \beta_i\le \alpha_i,$  and $\alpha_i\ge 1$ for all $i$. Then
\begin{equation}\label{eqn:FGK-rowsum}\sum_{k|n} c_d(k) = \prod_{\ell=1}^r \left ((\alpha_\ell-\beta_\ell+1) \phi(p_\ell^{\beta_\ell}) 
- \lfloor p_\ell^{\beta_\ell-1}\rfloor\right).
\end{equation}
\end{theorem}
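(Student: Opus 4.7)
The plan is to reduce the left-hand side to a product over the distinct prime divisors of $n$ by invoking the multiplicativity of Ramanujan sums, then evaluate each prime-local factor using the prime power formula.

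First, writing any divisor $k$ of $n$ uniquely as $k = \prod_{\ell=1}^r p_\ell^{\gamma_\ell}$ with $0 \leq \gamma_\ell \leq \alpha_\ell$, I would apply Corollary~\ref{cor:Factorisation} iteratively. At each step the coprimality hypothesis holds because $p_\ell^{\beta_\ell + \gamma_\ell}$ is coprime to $\prod_{j \neq \ell} p_j^{\beta_j + \gamma_j}$. This yields the full factorization
\[c_d(k) = \prod_{\ell=1}^r c_{p_\ell^{\beta_\ell}}(p_\ell^{\gamma_\ell}).\]

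Substituting this expression into $\sum_{k|n} c_d(k)$ and distributing the sum through the product (since the $\gamma_\ell$ range independently) gives
\[\sum_{k|n} c_d(k) = \prod_{\ell=1}^r \Bigl(\sum_{\gamma=0}^{\alpha_\ell} c_{p_\ell^{\beta_\ell}}(p_\ell^{\gamma})\Bigr).\]

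It then remains to evaluate each local sum. For $\beta_\ell \geq 1$, Corollary~\ref{cor:prime-powers} says the summand is $\phi(p_\ell^{\beta_\ell})$ on the $\alpha_\ell - \beta_\ell + 1$ values $\gamma \in \{\beta_\ell, \ldots, \alpha_\ell\}$, equals $-p_\ell^{\beta_\ell - 1}$ when $\gamma = \beta_\ell - 1$, and is zero otherwise, giving the total $(\alpha_\ell - \beta_\ell + 1)\phi(p_\ell^{\beta_\ell}) - p_\ell^{\beta_\ell - 1}$. For $\beta_\ell = 0$, the summand is always $c_1(p_\ell^\gamma) = 1$, so the local sum is $\alpha_\ell + 1$; this agrees with the stated formula because $\phi(1) = 1$ and $\lfloor p_\ell^{-1}\rfloor = 0$ for every prime $p_\ell$. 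The whole argument is a straightforward bookkeeping exercise once the two corollaries are in hand; the only point requiring care is this $\beta_\ell=0$ edge case, which is precisely what the floor notation in the statement is designed to absorb uniformly.
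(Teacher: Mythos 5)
Your proof is correct and complete. Note, however, that the paper does not actually prove this statement: it is imported verbatim from \cite[Theorem 4.5]{FGK2014}, where it is derived within the supercharacter-theoretic framework, so there is no internal proof to compare against. What you have supplied is an elementary, self-contained derivation using only tools already stated in the paper: the iterated application of Corollary~\ref{cor:Factorisation} is legitimate because at each peeling step $p_\ell^{\beta_\ell+\gamma_\ell}$ is coprime to the product over the remaining primes, the interchange of sum and product is valid because the exponents $\gamma_\ell$ range independently, and your local evaluation via Corollary~\ref{cor:prime-powers} correctly counts $\alpha_\ell-\beta_\ell+1$ values of $\gamma$ contributing $\phi(p_\ell^{\beta_\ell})$ plus the single value $\gamma=\beta_\ell-1$ contributing $-p_\ell^{\beta_\ell-1}$ when $\beta_\ell\ge 1$. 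You also correctly identify that the floor $\lfloor p_\ell^{\beta_\ell-1}\rfloor$ exists only to absorb the $\beta_\ell=0$ case, where the local sum is $\alpha_\ell+1$; this is exactly the same case split $\beta=0$ versus $\beta>0$ that the paper performs later in the proof of Corollary~\ref{cor:Ramrowsum-nonneg}, but there the product formula is taken as given. Your argument buys an elementary proof where the paper relies on an external reference.
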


From this we deduce the following observation, which will be crucial to the proof of Theorem~\ref{thm:Chapoton-Shareshian}.  

\begin{df}\label{def:row-sum-Mn} For $n\ge 1$, denote by $a_d(n)$ the row sum of $M_n$ indexed by the divisor $d$ of $n$.  Thus $a_d(n)=\sum_{k|n} c_d(k)$.
\end{df}

\begin{cor}\label{cor:Ramrowsum-nonneg} The row sum $a_d(n)$ of $M_n$ indexed by the divisor $d$ of $n,$  is always  nonnegative.  Moreover, $a_d(n)=0$ if and only if $n$ is even and $\frac{n}{d}$ is odd. In particular $a_n(n)=0$  for all even $n$.
\end{cor}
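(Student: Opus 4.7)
The plan is to apply Theorem~\ref{thm:FowlerGarciaKaraali} directly: since equation \eqref{eqn:FGK-rowsum} expresses $a_d(n)$ as a product indexed by the distinct prime divisors of $n$, it suffices to analyze each factor
\[
F_\ell := (\alpha_\ell - \beta_\ell + 1)\phi(p_\ell^{\beta_\ell}) - \lfloor p_\ell^{\beta_\ell-1}\rfloor
\]
on its own and identify exactly when it is positive versus zero.

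First I would dispose of the case $\beta_\ell = 0$: here the floor vanishes and $\phi(1)=1$, so $F_\ell = \alpha_\ell+1 \geq 2$ (since $\alpha_\ell\geq 1$). For $\beta_\ell \geq 1$, I would use the standard formula $\phi(p_\ell^{\beta_\ell}) = p_\ell^{\beta_\ell-1}(p_\ell-1)$ to factor out $p_\ell^{\beta_\ell-1}$, reducing the question to the sign of the bracket $(\alpha_\ell-\beta_\ell+1)(p_\ell-1)-1$. When $p_\ell$ is an odd prime, the bracket is at least $1\cdot 2 - 1 = 1$, so $F_\ell > 0$; when $p_\ell = 2$, the bracket collapses to $\alpha_\ell-\beta_\ell$, which is nonnegative and vanishes precisely when $\alpha_\ell = \beta_\ell$.

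Combining these observations yields $a_d(n) \geq 0$ for every divisor $d$ of $n$, with $a_d(n) = 0$ if and only if some factor $F_\ell$ is zero. By the analysis above this forces $p_\ell = 2$, $\beta_\ell \geq 1$, and $\beta_\ell = \alpha_\ell$; equivalently, $n$ is even and the $2$-adic valuations of $d$ and $n$ coincide, which is precisely the condition that $n/d$ is odd. The special case $d = n$ gives $n/d = 1$, so $a_n(n) = 0$ for every even $n$.

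I do not anticipate a serious obstacle: the argument is essentially a mechanical case analysis powered by \eqref{eqn:FGK-rowsum}. The only point requiring a bit of care is the translation from the prime-by-prime vanishing criterion (``the $2$-part of $d$ equals that of $n$'') to the cleaner formulation stated in the corollary (``$n/d$ is odd''), which is purely bookkeeping with $2$-adic valuations.
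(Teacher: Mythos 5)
Your proposal is correct and follows essentially the same argument as the paper: both apply Theorem~\ref{thm:FowlerGarciaKaraali}, treat the cases $\beta_\ell=0$ and $\beta_\ell\ge 1$ separately, factor out $p_\ell^{\beta_\ell-1}$, and observe that the resulting bracket (your $(\alpha_\ell-\beta_\ell+1)(p_\ell-1)-1$ is the paper's $(\alpha_\ell-\beta_\ell)(p_\ell-1)+p_\ell-2$) is positive for odd primes and equals $\alpha_\ell-\beta_\ell$ for $p_\ell=2$. The translation of the vanishing condition to ``$n/d$ odd'' is handled identically.
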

\begin{proof} For a prime $q$ and $n$, $d$ such that $d|n$ and $q|n$, write  $f(n,d, q)$ 
for the expression 
$(\alpha-\beta+1)\phi(q^{\beta}) -  \lfloor q^{\beta-1}\rfloor$
where $n=q^\alpha m_1$, $\alpha\ge 1$,  and  $d=q^\beta m_2$, $0\le \beta\le \alpha$ 
and $q\!\not |\, m_i,$ $i=1,2$. 

Then~\eqref{eqn:FGK-rowsum} says that 
\[a_d(n)=\sum_{k|n} c_d(k) = \prod_{\substack{q|n\\ q\text{ prime}}} f(n,d,q) .\]
We observe that if $\beta=0$ then
$$
f(n,d,q)  = \alpha+1,
$$
while if $\beta>0$ then
\begin{eqnarray*}
f(n,d,q) & = & (\alpha-\beta+1)q^{\beta-1}(q-1)-q^{\beta-1} \\ & = & q^{\beta-1}((\alpha-\beta)(q-1)+q-2).
\end{eqnarray*}
It follows that $f(n,d,q) \geq 0$, with equality if and only if $q=2$ and $\beta=\alpha$, the second condition holding if and only if $n/d$ is odd. 
\end{proof}

\subsection{The Foulkes representations of the symmetric group $\mathfrak{S}_n$}\label{sec:Foulkes-reps} Denote by $C_n$ the cyclic subgroup of the symmetric group $\mathfrak{S}_n$ generated by the long cycle $(1,2,\ldots,n). $   We define a Foulkes representation of $\mathfrak{S}_n$ to be any of the $n$ representations  obtained by inducing an irreducible representation of   $C_n$   up to $\mathfrak{S}_n$.  
For $1 \leq r \le n$, let $\ell_n^{(r)}$ denote the Frobenius characteristic of the induced 
representation $\mbox{exp}(\frac{2i\pi}{n} \cdot r)\big\uparrow_{C_n}^{\mathfrak{S}_n}$.  We observe that two elements of
 $C_n$ are conjugate in $\mathfrak{S}_n$ if and only if they have the same order.  
It follows that $\ell_n^{(r)}=\ell_n^{(s)}$ if and only if $(n,r)=(n,s)$.  
In particular, each $\ell_n^{(r)}$ is equal to a unique $\ell_n^{(d)}$ with $d|n$, 
hence we have $\tau(n)$ different Foulkes characters.  The symmetric function $\ell_n^{(1)}$, also denoted by $Lie_n$ in \cite{SuJCTA2021}, is the Frobenius characteristic of the well-studied representation  of $\mathfrak{S}_n$ on the multilinear component of the free Lie algebra on $n$ generators, see \cite{Reutenauer} and  \cite[Chapter 7, Ex. 7.89]{RPSEC2}.

We refer the reader to \cite{Macdonald2015-1995} and \cite[Chapter 7]{RPSEC2} for the basics of symmetric functions and the character theory of the symmetric group. In what follows $p_n$ denotes the power sum symmetric function of homogeneous degree $n$, and $\langle\ , \ \rangle$ is the Hall inner product in the ring of symmetric functions.

Now we state the theorem of Foulkes,   
which 
asserts Part (1) of the following \cite{Foulkes1972}. See  \cite [Ex. 7.88]{RPSEC2} for the definition of the major index statistic on tableaux. 
\begin{theorem}\label{thm:Foulkes} Let $1\leq r  \leq n.$  
\begin{enumerate}
\item (Foulkes \cite{Foulkes1972}, \cite[Ex. 7.88]{RPSEC2})
$\ell_n^{(r)}=\dfrac{1}{n} \sum_{d|n} c_d(r) p_d^\frac{n}{d}.$
\item (\cite[(7.191)]{KW2001},  \cite[Ex. 7.88]{RPSEC2}) The multiplicity $\langle \ell_n^{(r)}, s_\lambda\rangle $ of the Schur function $s_\lambda$ in  $\ell_n^{(r)}$ is the number of standard Young tableaux of shape $\lambda$ with major index congruent to $r$ modulo $n.$
\end{enumerate}
\end{theorem}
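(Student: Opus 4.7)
The essential tool is the formula
\[\ch(\chi\!\uparrow_H^{\mathfrak{S}_n})=\frac{1}{|H|}\sum_{h\in H}\chi(h)\,p_{\mathrm{type}(h)},\]
valid for any subgroup $H\leq\mathfrak{S}_n$ and character $\chi$ of $H$, which one obtains by writing out the induced character and exchanging the order of summation (each conjugate of $h\in H$ contributes the same cycle-type power sum). I apply this with $H=C_n$ generated by $\sigma=(1,2,\ldots,n)$ and $\chi(\sigma^k)=e^{2\pi irk/n}$. Since $\sigma^k$ has cycle type consisting of $\gcd(n,k)$ cycles of length $n/\gcd(n,k)$, the plan is to group the sum over $k\in\{0,1,\ldots,n-1\}$ by the value $d:=n/\gcd(n,k)$. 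For a fixed divisor $d$ of $n$, the contributing indices are $k=(n/d)j$ with $1\leq j\leq d$ and $\gcd(j,d)=1$, and the corresponding root of unity simplifies to $e^{2\pi irj/d}$. The inner sum over $j$ is then exactly the Ramanujan sum $c_d(r)$ by definition, yielding the formula in part (1).

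\textbf{Part (2).} By Frobenius reciprocity, the multiplicity of $V_\lambda$ in $\ell_n^{(r)}$ equals
\[\bigl\langle\chi_r,\chi^\lambda|_{C_n}\bigr\rangle_{C_n},\]
which is the multiplicity of $e^{2\pi ir/n}$ among the eigenvalues of $\sigma$ acting on the irreducible $\mathfrak{S}_n$-module $V_\lambda$. The problem therefore reduces to identifying those eigenvalues. Here I would invoke the Kraskiewicz--Weyman theorem (equivalently, a consequence of Springer's theory of regular elements applied to $\mathfrak{S}_n$), which asserts that the eigenvalues of $\sigma$ on $V_\lambda$, counted with multiplicity, are $\{e^{2\pi i\,\mathrm{maj}(T)/n}:T\in\mathrm{SYT}(\lambda)\}$. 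Consequently the multiplicity of $e^{2\pi ir/n}$ is the number of standard Young tableaux of shape $\lambda$ with $\mathrm{maj}(T)\equiv r\pmod{n}$, which is precisely the formula of part (2).

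The main obstacle is clearly part (2): the Kraskiewicz--Weyman ingredient is substantial and reflects a delicate compatibility between the natural cyclic action of $\sigma$ on $V_\lambda$ and the major-index statistic on tableaux. Part (1), by contrast, is bookkeeping once the induced-character formula is in hand and one recognizes the grouping by divisor as producing the Ramanujan sum. A partial shortcut to part (2) via the Murnaghan--Nakayama rule would express $\chi^\lambda(\sigma^k)$ as a signed sum over border-strip tableaux, but translating this back to a positive count by major index still requires essentially the full strength of the cited result.
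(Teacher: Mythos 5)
Your proposal is correct. The paper does not prove this theorem at all --- it is quoted from Foulkes, Stanley (Ex.~7.88), and Kra\'skiewicz--Weyman --- and your argument is exactly the standard one underlying those references: Part (1) is the routine computation of $\ch(\chi\!\uparrow_{C_n}^{\mathfrak{S}_n})$ with the sum over $\sigma^k$ grouped by $d=n/\gcd(n,k)$ so that the inner sum over $j$ coprime to $d$ produces $c_d(r)$, and Part (2) correctly reduces via Frobenius reciprocity to the eigenvalue distribution of the long cycle on $V_\lambda$, whose identification with the major-index statistic is precisely the cited Kra\'skiewicz--Weyman result and is reasonably taken as given.
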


The last theorem that we record here for later use is a result of Swanson \cite{Swanson2018}.  It states that almost all irreducibles occur in the representations $\ell_n^{(r)}$.

\begin{theorem}\label{thm:Swanson}\cite[Theorem~1.5]{Swanson2018}
The multiplicity of the irreducible indexed by
 $\lambda$ in $\ell_n^{(r)}$, $1\le r\le n$,  is zero in precisely the following cases:
\begin{itemize}
\item $\lambda=(n)$ and $r\in\{1,2,\ldots, n-1\}$;

\item $\lambda=(1^n)$ and 
$r\in \begin{cases} \{1,2,\ldots, n-1\}, &n \text{ odd},\\
              \{1,2,\ldots, n-1, n\}\setminus\{\tfrac{n}{2}\},  &n \text{ even};
\end{cases}$
\item $\lambda=(n-1,1)$ and $r=n$;

\item $\lambda=(2, 1^{n-2})$  and
 $r=\begin{cases} n, & n\text{ odd},\\ \tfrac{n}{2}, & n\text{ even}; \end{cases}$

\item $\lambda=(2,2)$ and $r=1,3$,  $\lambda=(2,2,2)$ and $r=1,5$,  $\lambda=(3,3)$  and  $r=2,4$.
\end{itemize}
\end{theorem}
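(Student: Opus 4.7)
My plan is to invoke part (2) of Theorem~\ref{thm:Foulkes} to replace the question with a combinatorial one: the multiplicity $\langle \ell_n^{(r)}, s_\lambda\rangle$ equals the number of standard Young tableaux of shape $\lambda$ whose major index is congruent to $r$ modulo $n$. Equivalently, setting
\[f^\lambda(q) := \sum_{T \in \mathrm{SYT}(\lambda)} q^{\mathrm{maj}(T)} = q^{b(\lambda)}\frac{[n]_q!}{\prod_{c \in \lambda}[h(c)]_q}\]
by the $q$-hook length formula, the multiplicity is the coefficient of $q^r$ in $f^\lambda(q) \bmod (q^n - 1)$. Thus the theorem asks for the precise list of $(\lambda, r)$ where this coefficient vanishes.

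The first task is to verify that each listed pair yields zero. For $\lambda = (n)$ and $\lambda = (1^n)$ the unique SYT has major index $0$ and $\binom{n}{2}$ respectively, whose reductions modulo $n$ account for the first two bullets (the parity of $n$ entering through $\binom{n}{2} \bmod n$). For $\lambda = (n-1, 1)$, the $n-1$ SYT are parameterised by the entry $k \in \{2, \ldots, n\}$ placed in the second row, with descent set $\{k-1\}$ and major index $k-1$, so the major indices are exactly $1, 2, \ldots, n-1$ and $r = n$ is missed. The conjugate case $\lambda = (2, 1^{n-2})$ follows by the involution $T \mapsto T^t$, which complements descent sets and so sends $\mathrm{maj}(T)$ to $\binom{n}{2} - \mathrm{maj}(T)$. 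The three sporadic shapes $(2,2), (2,2,2), (3,3)$ are small and can be checked directly by enumerating SYT.

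The main obstacle is the converse: for every other $(\lambda, r)$ the multiplicity is strictly positive. A natural route is to pair part (1) of Theorem~\ref{thm:Foulkes} with the Murnaghan--Nakayama rule to obtain
\[\langle \ell_n^{(r)}, s_\lambda\rangle = \frac{1}{n}\sum_{d \mid n} c_d(r)\,\chi^\lambda_{(d^{n/d})},\]
where $\chi^\lambda_{(d^{n/d})}$ is a signed count of fillings of $\lambda$ by $n/d$ rim hooks of length $d$, nonzero only when $\lambda$ has empty $d$-core. Equivalently, evaluating $f^\lambda(q)$ at primitive $m$-th roots of unity (for $m \mid n$) reduces by the $q$-hook formula to ratios of products of sines, whose vanishing is dictated by coincidences among hook lengths of $\lambda$. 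Such coincidences are rigid and force $\lambda$ to be a one-row, one-column, or near-hook shape apart from the small sporadic exceptions.

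The bulk of the work is then a shape-by-shape case analysis: for hooks, for two-row and two-column shapes near the boundary, for rectangles, and for a residual ``bulk'' of generic shapes, exhibit for each $r$ either an explicit SYT of the desired major index class, or verify via the sine-product evaluation that the corresponding Fourier component of $f^\lambda(q) \bmod (q^n-1)$ is nonzero. I expect this case analysis, together with a small finite check to isolate exactly the sporadic shapes $(2,2), (2,2,2), (3,3)$, to be the hard part of the argument.
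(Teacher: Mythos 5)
The paper does not prove this statement at all: it is quoted verbatim from Swanson \cite[Theorem~1.5]{Swanson2018}, so there is no internal proof to compare against. Judged on its own terms, your proposal correctly handles the easy half. The reduction via part (2) of Theorem~\ref{thm:Foulkes}, the computation of the major indices for $(n)$, $(1^n)$ and $(n-1,1)$, the transposition argument for $(2,1^{n-2})$, and the finite checks for $(2,2)$, $(2,2,2)$, $(3,3)$ are all sound and yield exactly the listed vanishing pairs.

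The gap is the converse, which is the entire content of the theorem, and your sketch does not contain an argument for it. Writing $\omega=e^{2\pi i/n}$, the multiplicity is $\frac{1}{n}\sum_{j=0}^{n-1}\omega^{-rj}f^\lambda(\omega^j)$, and positivity neither follows from nor is equivalent to non-vanishing of individual root-of-unity evaluations: one must rule out cancellation among the $n$ terms. The standard mechanism, and the one Swanson actually uses, is the triangle-inequality estimate $f^\lambda>\sum_{j=1}^{n-1}\left|f^\lambda(\omega^j)\right|$, where each $\left|f^\lambda(\omega^j)\right|$ is identified (via the $q$-hook length formula or Murnaghan--Nakayama) with $\left|\chi^\lambda\bigl((d^{n/d})\bigr)\right|$ for $d=n/(n,j)$ and then bounded; establishing this inequality outside an explicit finite list of shapes, and then checking that finite list, is where all the work lies. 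Your assertion that ``such coincidences are rigid and force $\lambda$ to be a one-row, one-column, or near-hook shape'' is unjustified and is not the right dichotomy: many shapes have nonzero character values on many rectangular classes, and what must be controlled is the total size of those values relative to $f^\lambda$. As written, your final paragraph restates the goal (``verify \dots that the corresponding Fourier component \dots is nonzero'') rather than proving it, so the proposal is a correct plan for the first half and only a pointer toward the second.
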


Finally, for the purposes of this paper, we make the following definition.
\begin{df}\label{defn:Schur-pos} Define a symmetric function $f$ of homogeneous degree $n$ to be Schur positive if it is the Frobenius characteristic of a true $\mathfrak{S}_n$-module. Equivalently,  $f$ is Schur positive if and only if it is a nonnegative integer combination of Schur functions.
\end{df}

\section{The sum $\sum_{d|n} p_d^{\tfrac{n}{d}}$}\label{sec:Rectangular}

In this section we investigate the subject of Chapoton's conjecture, the symmetric function $\sum_{d|n} p_d^{\tfrac{n}{d}}$.

The invertibility of the Ramanujan matrix $M_n$ and Proposition~\ref{prop:Orthogonality1}   imply the following. 

\begin{prop}\label{prop:basis} Fix $n\ge 1$. The symmetric  functions $\{\ell_n^{(m)}\}_{ m|n}$ form a Schur positive basis for the $\tau(n)$-dimensional subspace of the space of degree $n$ symmetric functions  spanned by the set $\{p_d^{\tfrac{n}{d}}\}_{d|n}.$

If $\{d_1,\ldots, d_{\tau(n)}\}$ is a fixed ordering of the divisors of $n$, then we have the matrix equation 
\begin{equation}\label{eqn:p-to-ell-matrix}
 \left[ p_{d_1}^{\tfrac{n}{d_1}}, \ldots,  p_{d_{\tau(n)}}^{\tfrac{n}{d_{\tau(n)}}} \right] 
=  \left[\ell_n^{\tfrac{n}{d_1}}, \ldots,  \ell_n^{\tfrac{n}{d_{\tau(n)}}}\right] \cdot M_n
\end{equation}
Hence for any scalars $\alpha(d)$, $d|n,$ we have 
\begin{equation}\label{eqn:lin-comb-p-to-ell}\sum_{d|n}\alpha(d) p_d^{\tfrac{n}{d}} 
= \sum_{k|n} \ell_n^{(\tfrac{n}{k})} \left( \sum_{d|n}c_k(\tfrac{n}{d}) \alpha(d)\right).\end{equation}
\end{prop}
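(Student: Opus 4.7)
The plan is to deduce everything from a single matrix identity obtained by specializing Foulkes' formula. From Theorem~\ref{thm:Foulkes}(1), setting $r = n/d_j$ for each divisor $d_j$ of $n$ yields
\[\ell_n^{(n/d_j)} = \frac{1}{n}\sum_{i=1}^{\tau(n)} c_{d_i}(n/d_j)\, p_{d_i}^{n/d_i},\]
and the coefficient on the right is exactly the $(i,j)$ entry of the Ramanujan matrix $M_n$. Writing $\vec{\ell} = [\ell_n^{(n/d_1)}, \ldots, \ell_n^{(n/d_{\tau(n)})}]$ and $\vec{p} = [p_{d_1}^{n/d_1}, \ldots, p_{d_{\tau(n)}}^{n/d_{\tau(n)}}]$ as row vectors, this is the single identity $\vec{\ell} = \tfrac{1}{n}\, \vec{p} \cdot M_n$.

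Next I would apply Proposition~\ref{prop:Orthogonality1}: multiplying on the right by $M_n$ and using $M_n^2 = n I_n$ gives $\vec{\ell} \cdot M_n = \vec{p}$, which is the asserted matrix equation~\eqref{eqn:p-to-ell-matrix}. Since the rectangular power sums $\{p_d^{n/d}\}_{d\mid n}$ are linearly independent (they are distinct elements of the power-sum basis of symmetric functions), they span a $\tau(n)$-dimensional subspace; the invertibility of $M_n$ then shows that $\{\ell_n^{(m)}\}_{m\mid n}$ is another basis for the same subspace. Schur positivity of each $\ell_n^{(m)}$ is immediate from its definition as the Frobenius characteristic of a genuine induced representation of $\mathfrak{S}_n$ from the cyclic subgroup $C_n$.

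Finally, formula~\eqref{eqn:lin-comb-p-to-ell} would follow by reading off the $j$-th component of~\eqref{eqn:p-to-ell-matrix}, namely $p_{d_j}^{n/d_j} = \sum_k c_{d_k}(n/d_j)\, \ell_n^{(n/d_k)}$, substituting into $\sum_j \alpha(d_j)\, p_{d_j}^{n/d_j}$, and interchanging the two summations (after renaming the outer index $d_k$ as $k$, it plays the role of the Foulkes parameter in~\eqref{eqn:lin-comb-p-to-ell}).

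There is no real obstacle here: Foulkes' formula supplies the change of basis in one direction, and the orthogonality $M_n^2 = n I_n$ supplies its inverse. The only delicate point is keeping the indexing straight: the row index of $M_n$ records the divisor $d$ labelling the rectangular power sum $p_d^{n/d}$, while the column index records the divisor $d'$ such that $n/d'$ is the Foulkes parameter $r$ appearing in $\ell_n^{(r)}$.
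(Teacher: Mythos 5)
Your proof is correct and follows essentially the same route as the paper: Foulkes' formula written as the matrix identity $\vec{\ell}=\tfrac{1}{n}\vec{p}\cdot M_n$, inverted via the orthogonality relation $M_n^2=nI_n$ of Proposition~\ref{prop:Orthogonality1}, with the linear-combination formula read off componentwise. Your added remarks on the linear independence of the rectangular power sums and on the indexing conventions are accurate and only make explicit what the paper leaves implicit.
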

\begin{proof} Theorem~\ref{thm:Foulkes} can be written as the matrix equation
\[\left[\ell_n^{\tfrac{n}{d_1}}, \ldots,  \ell_n^{\tfrac{n}{d_{\tau(n)}}}\right]
=\left[ p_{d_1}^{\tfrac{n}{d_1}}, \ldots,  p_{d_{\tau(n)}}^{\tfrac{n}{d_{\tau(n)}}} \right] \tfrac{1}{n} M_n,  \]
%
%
and the result follows from Proposition~\ref{prop:Orthogonality1}   by inverting the matrix $M_n$ and multiplying by the column vector whose $d$th entry is  $\alpha(d)$, recalling that the $(i,j)$ entry of $M_n$ is $c_{d_i}(\tfrac{n}{d_j})$. 
\end{proof}
The work of Section~\ref{sec:weightedRectangular}  will produce another basis of Schur positive symmetric functions for the space spanned by the set $\{ p_d^{\tfrac{n}{d}}\}_{d|n}$.

Recall from Definition~\ref{def:row-sum-Mn} that $a_k(n)=\sum_{d|n} c_k(\tfrac{n}{d})$ denotes the row sum of the matrix $M_n$ for the row corresponding to the divisor $k$ of $d$.

\begin{theorem}\label{thm:Chapoton-Shareshian}  We have the identity 
\begin{equation}\label{eqn:p-to-ell} \sum_{d|n} p_d^{\tfrac{n}{d}}=\sum_{k|n} a_k(n)\ell_n^{\tfrac{n}{k}}.\end{equation}
Moreover, the sum $\sum_{d|n} p_d^{\tfrac{n}{d}}$ is Schur positive.
\end{theorem}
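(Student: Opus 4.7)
The plan is to observe that both assertions of the theorem follow almost immediately from results already collected in the excerpt, so the task is mostly bookkeeping rather than new work.

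First I would establish the identity \eqref{eqn:p-to-ell} by specializing the change-of-basis formula \eqref{eqn:lin-comb-p-to-ell} from Proposition~\ref{prop:basis} to the case where all coefficients are equal to $1$, i.e.\ $\alpha(d)=1$ for every divisor $d$ of $n$. Substituting this choice gives
\[\sum_{d|n} p_d^{n/d} = \sum_{k|n} \ell_n^{(n/k)} \Bigl(\sum_{d|n} c_k(n/d)\Bigr),\]
and by Definition~\ref{def:row-sum-Mn} the inner sum is precisely $a_k(n)$, yielding the desired identity.

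Next I would deduce Schur positivity from this identity. Since each $\ell_n^{(r)}$ is the Frobenius characteristic of an honest induced representation of $\mathfrak{S}_n$, it is automatically Schur positive (see Theorem~\ref{thm:Foulkes} and the discussion of Foulkes representations in Section~\ref{sec:Foulkes-reps}). Hence any nonnegative integer combination of the $\ell_n^{(n/k)}$ is Schur positive. To conclude, I invoke Corollary~\ref{cor:Ramrowsum-nonneg}, which states precisely that $a_k(n)\ge 0$ for every divisor $k$ of $n$ (and moreover tells us exactly when $a_k(n)=0$, namely when $n$ is even and $n/k$ is odd).

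There is essentially no obstacle here: the nontrivial analytic content, namely the nonnegativity of the row sums $a_k(n)$ via the Fowler–Garcia–Karaali product formula~\eqref{eqn:FGK-rowsum}, has already been done in Corollary~\ref{cor:Ramrowsum-nonneg}, and the change-of-basis formula~\eqref{eqn:lin-comb-p-to-ell} has already been established using the orthogonality relation for the Ramanujan matrix. The only mild subtlety worth flagging explicitly in the write-up is the relabeling $k\leftrightarrow n/k$: one may equally well state the right-hand side as $\sum_{k|n} a_{n/k}(n)\,\ell_n^{(k)}$, but the form in~\eqref{eqn:p-to-ell} is the one directly produced by specializing~\eqref{eqn:lin-comb-p-to-ell}.
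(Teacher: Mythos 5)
Your proposal is correct and follows exactly the paper's own argument: specialize \eqref{eqn:lin-comb-p-to-ell} with $\alpha(d)=1$ to obtain \eqref{eqn:p-to-ell}, then invoke Corollary~\ref{cor:Ramrowsum-nonneg} for the nonnegativity of the row sums $a_k(n)$ and the Schur positivity of the Foulkes characteristics $\ell_n^{(r)}$. No further comment is needed.
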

\begin{proof}  
 Eqn.~\eqref{eqn:p-to-ell} is immediate upon setting $\alpha(d)=1$ for all $d|n$ in~ \eqref{eqn:lin-comb-p-to-ell}.
But Corollary~\ref{cor:Ramrowsum-nonneg} tells us that $a_k(n)\ge 0$ for all $k|d$, and hence the Schur positivity statement follows.
\end{proof}
Denote by $\Phi_n$ the representation whose Frobenius characteristic is 
$ \sum_{d|n} p_d^{\tfrac{n}{d}}.$

\begin{cor}\label{cor:restrict} The restriction to $\mathfrak{S}_{n-1}$ of $\Phi_n$ consists of $n$ copies of the regular representation. We then have, for the row sums $a_d(n)$ of the Ramanujan matrix, 
\[\sum_{d|n} a_d(n) =n=\sum_{d_1, d_2|n} c_{d_1}(\tfrac{n}{d_2}).\]
\end{cor}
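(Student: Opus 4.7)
The plan is to exploit the standard fact that, for a representation $V$ of $\mathfrak{S}_n$ with Frobenius characteristic $f$, the Frobenius characteristic of $V{\downarrow}_{\mathfrak{S}_{n-1}}$ is $\partial f/\partial p_1$. Applied to $f=\sum_{d|n} p_d^{n/d}$, this is a one-line computation: for $d>1$ the monomial $p_d^{n/d}$ contains no $p_1$, so its $p_1$-derivative vanishes, and only the $d=1$ term contributes, giving
\[
\frac{\partial}{\partial p_1}\sum_{d|n}p_d^{n/d} \;=\; \frac{\partial}{\partial p_1} p_1^n \;=\; n\,p_1^{n-1}.
\]
Since $p_1^{n-1}$ is the Frobenius characteristic of the regular representation of $\mathfrak{S}_{n-1}$, this identifies $\Phi_n{\downarrow}_{\mathfrak{S}_{n-1}}$ with $n$ copies of the regular representation, proving the first assertion.

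For the numerical identity I would compare dimensions on both sides of the decomposition
\[
\sum_{d|n}p_d^{n/d} \;=\; \sum_{k|n} a_k(n)\,\ell_n^{(n/k)}
\]
furnished by Theorem~\ref{thm:Chapoton-Shareshian}. From the restriction computation above (or equivalently from the fact that $[p_1^n]p_d^{n/d}=0$ for $d>1$ while $p_1^n$ itself is the Frobenius of the regular $\mathfrak{S}_n$-representation), the dimension of $\Phi_n$ is $n!$. On the other hand, each Foulkes representation $\ell_n^{(r)}$ is induced from a one-dimensional character of $C_n$, so its dimension is $[\mathfrak{S}_n:C_n]=(n-1)!$. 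Dividing $n!=(n-1)!\,\sum_{k|n} a_k(n)$ by $(n-1)!$ yields $\sum_{k|n} a_k(n)=n$.

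The second equality $\sum_{k|n}a_k(n)=\sum_{d_1,d_2|n}c_{d_1}(n/d_2)$ is then just the unfolding of Definition~\ref{def:row-sum-Mn}. I do not expect any serious obstacle: the only thing to be careful about is matching conventions (restriction versus induction, and the factor of $1/n$ in the Frobenius map), which is handled once one invokes the identity $[\partial/\partial p_1]f$ for restriction and the formula $\dim \ell_n^{(r)}=(n-1)!$ for the Foulkes representations.
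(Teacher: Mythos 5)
Your proof is correct, and it takes a mildly different route from the paper's. The paper establishes both assertions by combining the decomposition $\sum_{d|n}p_d^{n/d}=\sum_{k|n}a_k(n)\,\ell_n^{(n/k)}$ of Theorem~\ref{thm:Chapoton-Shareshian} with the fact that every Foulkes representation $\ell_n^{(r)}$ restricts to the regular representation of $\mathfrak{S}_{n-1}$, and it notes that the numerical identity is also a special case of the orthogonality relations for Ramanujan sums. You instead obtain the restriction statement by the direct computation $\partial/\partial p_1$ applied to the power-sum side (where only the $d=1$ term survives), and the identity $\sum_{d|n}a_d(n)=n$ by comparing dimensions across the decomposition, using only $\dim\ell_n^{(r)}=(n-1)!$. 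The trade-off is small: your route avoids invoking the Mackey-theoretic fact that $\ell_n^{(r)}{\downarrow}_{\mathfrak{S}_{n-1}}$ is the regular representation, replacing it with the weaker dimension count, which suffices because the power-sum side already pins down the restriction as $n\,p_1^{n-1}$; the paper's version is more uniform in that the single fact about restrictions of Foulkes characters delivers both claims simultaneously. The final unfolding of Definition~\ref{def:row-sum-Mn} is the same in both arguments.
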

\begin{proof}  This follows from~\eqref{eqn:p-to-ell}, since the representations whose characteristics are $\ell_n^{(r)}$ all restrict to the regular representation of $S_{n-1}.$ The fact that the sum of all the elements in the Ramanujan matrix equals $n$ also follows as a special case of the orthogonality relations, see 
\cite[Theorem 2.8 and p.79]{McCarthy1986}.
\end{proof}
\begin{cor}  The multiplicity of the Schur function $s_\lambda$ in $\sum_{d|n} p_d^{\tfrac{n}{d}}$ is the nonnegative integer equal to $\sum_{k|n} a_{\frac{n}{k}}(n) t_{\lambda, k},$ 
where $t_{\lambda, k}$ is the number of standard Young tableaux of shape  $\lambda$ with major index congruent to $k \bmod n$.  

Letting $\chi^\lambda$ be the character of the $\mathfrak{S}_n$-irreducible indexed by $\lambda$,  this quantity is also  
the sum of character values $\sum_{d|n} \chi^\lambda(d^{\tfrac{n}{d}}) $.
\end{cor}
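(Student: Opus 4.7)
The plan is to read off both formulas by computing the Hall inner product $\langle s_\lambda,\sum_{d|n} p_d^{n/d}\rangle$ in two different ways: once via the Foulkes expansion from Theorem~\ref{thm:Chapoton-Shareshian}, and once directly via the standard expansion of $s_\lambda$ in the power-sum basis.

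First I would invoke Theorem~\ref{thm:Chapoton-Shareshian}, which gives $\sum_{d|n} p_d^{n/d}=\sum_{k|n} a_k(n)\,\ell_n^{(n/k)}$, and then apply Theorem~\ref{thm:Foulkes}(2) to each Foulkes characteristic. Since $\langle \ell_n^{(r)},s_\lambda\rangle$ counts standard Young tableaux of shape $\lambda$ with major index $\equiv r\pmod n$, taking the inner product of both sides with $s_\lambda$ produces
\[
\langle s_\lambda, \sum_{d|n} p_d^{n/d}\rangle = \sum_{k|n} a_k(n)\, t_{\lambda, n/k}.
\]
Reindexing by $k\mapsto n/k$ (a bijection on the set of divisors of $n$) turns the right-hand side into $\sum_{k|n} a_{n/k}(n)\, t_{\lambda, k}$, which is the formula claimed. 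Nonnegativity of this integer is automatic, since each $a_{n/k}(n)\ge 0$ by Corollary~\ref{cor:Ramrowsum-nonneg} and each $t_{\lambda,k}$ is a count, but in any case it follows already from the Schur positivity established in Theorem~\ref{thm:Chapoton-Shareshian}.

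For the character-theoretic expression, I would use the standard fact (a consequence of the definition of the Frobenius characteristic and the orthogonality $\langle p_\mu,p_\nu\rangle = z_\mu\delta_{\mu,\nu}$) that $\langle s_\lambda, p_\mu\rangle = \chi^\lambda(\mu)$ for any partition $\mu\vdash n$. Applying this coordinate-wise to the power-sum expansion of $\sum_{d|n} p_d^{n/d}$ (where the partition $d^{n/d}$ has $n/d$ parts equal to $d$) yields
\[
\langle s_\lambda, \sum_{d|n} p_d^{n/d}\rangle = \sum_{d|n} \chi^\lambda(d^{n/d}),
\]
which is exactly the second assertion.

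The argument is entirely a bookkeeping exercise; there is no real obstacle, but the one thing worth being careful about is the indexing swap $k\leftrightarrow n/k$ that converts between the natural expression coming from the Foulkes expansion and the stated form of the corollary. Everything else is a direct application of results already recorded in the paper.
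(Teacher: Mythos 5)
Your proposal is correct and follows essentially the same route as the paper: both take the Hall inner product of the expansion $\sum_{d|n}p_d^{n/d}=\sum_{k|n}a_k(n)\ell_n^{(n/k)}$ from Theorem~\ref{thm:Chapoton-Shareshian} with $s_\lambda$, apply Theorem~\ref{thm:Foulkes}(2) to interpret $\langle\ell_n^{(r)},s_\lambda\rangle$ as a major-index tableau count, and obtain the character-value form from $\langle s_\lambda,p_\mu\rangle=\chi^\lambda(\mu)$. Your explicit handling of the $k\leftrightarrow n/k$ reindexing and the nonnegativity via Corollary~\ref{cor:Ramrowsum-nonneg} is just a more detailed writeup of the same argument.
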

\begin{proof}  
The result follows from Theorem~\ref{thm:Foulkes}, since the multiplicity of the Schur function $s_\lambda$ in $\sum_{d|n} p_d^{\tfrac{n}{d}}$ equals 
\begin{equation}
\sum_{k|n} a_{\frac{n}{k}}(n) \langle \ell_n^{(k)}, s_\lambda\rangle. \qedhere
\end{equation}
%
\end{proof}
We have the following special cases. 
As noted in \cite{Chapoton2021}, the sequence of multiplicities in Item (2) below  is OEIS A112329. Here we give an independent calculation using Theorem~\ref{thm:FowlerGarciaKaraali} and Theorem~\ref{thm:Swanson}.

\begin{prop}\label{prop:triv-sign-Rn0} For the representation $\Phi_n$ with Frobenius characteristic 
$\sum_{d|n} p_d^{\tfrac{n}{d}}$, 
\begin{enumerate}
\item  the trivial representation occurs $\tau(n)$ times;

\item The sign representation occurs with  multiplicity $\langle s_{(1^n)}, \Phi_n \rangle$ equal to 
\[\sum_{d|n} (-1)^{n+d}\!=\begin{cases} 
\tau(n),  &\text{ if } $n$ \text{ is odd},\\
\tau(\tfrac{n}{4}), &\text{if } 4 | n,\\
0, \!& \text{if $n$ is twice an odd integer}. 
\end{cases}\]
\item  The irreducible indexed by   $(n-1,1)$ occurs $n-\tau(n)$ times;

\item The irreducible indexed by $(2, 1^{n-1})$ occurs with multiplicity equal to \\ 
$n-\sum_{d|n} (-1)^{n+d}$.
\end{enumerate}
\end{prop}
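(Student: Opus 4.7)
The plan is to combine the Foulkes decomposition $\Phi_n=\sum_{k|n}a_k(n)\ell_n^{(n/k)}$ of Theorem~\ref{thm:Chapoton-Shareshian} with standard tools. Items (1) and (2) admit direct calculations from the power-sum expansion, using $\langle p_\mu,h_n\rangle=1$ and $\langle p_\mu,e_n\rangle=(-1)^{n-\ell(\mu)}$ for any $\mu\vdash n$. For (1), summing over the partitions $(d^{n/d})$ with $d|n$ yields $\tau(n)$ immediately. For (2), one obtains $\sum_{d|n}(-1)^{n-n/d}=\sum_{d|n}(-1)^{n+d}$ after reindexing $d\leftrightarrow n/d$; evaluating by cases on $n=2^am$ with $m$ odd (for $a=0$ every term is $1$, giving $\tau(n)$; for $a\ge 1$ the sum equals $(\text{even divisors})-(\text{odd divisors})=(a-1)\tau(m)$, which vanishes when $a=1$ and equals $\tau(n/4)$ when $a\ge 2$) recovers the three prescribed values.

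For (3) and (4), I would invoke Swanson's Theorem~\ref{thm:Swanson} together with a total-count argument, avoiding explicit descent-set enumeration. Both hook shapes $(n-1,1)$ and $(2,1^{n-2})$ have $n-1$ standard tableaux by the hook length formula, so the tableau interpretation in Theorem~\ref{thm:Foulkes}(2) gives $\sum_{r=1}^{n}\langle\ell_n^{(r)},s_\lambda\rangle=n-1$. Since Theorem~\ref{thm:Swanson} states that exactly one $r\in\{1,\ldots,n\}$ contributes $0$ in each of these cases, the remaining $n-1$ values must each contribute exactly $1$. For $\lambda=(n-1,1)$ the zero occurs at $r=n$, corresponding in the decomposition to $k=1$; together with $c_1(k)=1$ (whence $a_1(n)=\tau(n)$) and $\sum_{k|n}a_k(n)=n$ from Corollary~\ref{cor:restrict}, the multiplicity is $n-\tau(n)$. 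For $\lambda=(2,1^{n-2})$ the zero is at $r=n$ when $n$ is odd (so $k=1$) and at $r=n/2$ when $n$ is even (so $k=2$), giving multiplicities $n-\tau(n)$ and $n-a_2(n)$, respectively.

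The final step is to unify both parity cases of (4) as $n-\sum_{d|n}(-1)^{n+d}$. For odd $n$, every summand equals $1$ so the alternating sum is $\tau(n)$ and the formula reduces to $n-\tau(n)$. For even $n$, Corollary~\ref{cor:prime-powers} gives $c_2(k)=1$ when $k$ is even and $c_2(k)=-1$ when $k$ is odd, so $a_2(n)$ equals the number of even divisors of $n$ minus the number of odd divisors, which is $\sum_{d|n}(-1)^d=\sum_{d|n}(-1)^{n+d}$. The main obstacle, if any, is the bookkeeping in part (4): correctly identifying which row sum of $M_n$ is subtracted in each parity regime, and matching $a_2(n)$ against the alternating divisor sum so that the formula takes the uniform closed form $n-\sum_{d|n}(-1)^{n+d}$.
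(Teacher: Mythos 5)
Your proof is correct, but it reaches items (2)--(4) by a genuinely different route than the paper. For item (2), both you and the paper obtain the first equality $\langle s_{(1^n)},\Phi_n\rangle=\sum_{d|n}(-1)^{n+d}$ from the character value $\langle p_{(d^{n/d})},e_n\rangle=(-1)^{n-n/d}$; but you then evaluate the divisor sum directly (even minus odd divisors of $n=2^am$ gives $(a-1)\tau(m)$), whereas the paper instead recomputes the multiplicity a second way, via Theorem~\ref{thm:Swanson} (the sign appears only in $\ell_n^{(n)}$ or $\ell_n^{(n/2)}$) and the explicit product formula for the row sum $a_2(n)$ from Theorem~\ref{thm:FowlerGarciaKaraali}. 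Your evaluation is more elementary and self-contained; the paper's has the side benefit of exhibiting the answer as a row sum of $M_n$. For items (3) and (4) the paper avoids Swanson entirely: it restricts $\Phi_n$ to $\mathfrak{S}_{n-1}$, where Corollary~\ref{cor:restrict} gives $n$ copies of the regular representation, and then applies Frobenius reciprocity and the branching rule (the trivial, resp.\ sign, of $\mathfrak{S}_{n-1}$ is covered exactly by $(n)$ and $(n-1,1)$, resp.\ $(1^n)$ and $(2,1^{n-2})$), so the hook multiplicities are $n$ minus those computed in (1) and (2). You instead use Swanson plus the count $f^{(n-1,1)}=f^{(2,1^{n-2})}=n-1$ to pin down $\langle\ell_n^{(r)},s_\lambda\rangle=1$ for all but one $r$, and then subtract the single excluded coefficient $a_1(n)=\tau(n)$ or $a_2(n)$ from $\sum_{k|n}a_k(n)=n$; your final matching of $a_2(n)$ with $\sum_{d|n}(-1)^{d}$ via Corollary~\ref{cor:prime-powers} is correct and closes the loop. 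Both arguments ultimately rest on the decomposition of Theorem~\ref{thm:Chapoton-Shareshian} and on $\sum_{k|n}a_k(n)=n$; the paper's restriction argument is shorter and needs less input, while yours gives the finer information that every $\ell_n^{(r)}$ with $r$ not excluded by Swanson contains each hook exactly once.
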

\begin{proof}
 Item (1) is clear from the sum of power sums in the left-hand side of~\eqref{eqn:p-to-ell}.  

For Item (2), we have $\chi^{(1^n)}(d^{\tfrac{n}{d}})= (-1)^{(d-1)\tfrac{n}{d}}$, giving $\sum_{d|n}\chi^{(1^n)}(d^{\tfrac{n}{d}})=\sum_{d|n}(-1)^{n-\tfrac{n}{d}} =\sum_{d|n}(-1)^{n+d}.$ Now we use Theorem~\ref{thm:Swanson}, which asserts that the sign representation occurs with nonzero multiplicity in $\ell_n^{\tfrac{n}{k}}$ only if $k=1$ and $n$ is odd, or $k=2$ and $n$ is even. In either case, the multiplicity can be verified to be exactly one, from Part (2) of Theorem~\ref{thm:Foulkes}, for example, since the unique standard tableau of shape $(1^n)$ has major index  $n(n-1)/2.$ 
Now use~\eqref{eqn:p-to-ell} and Corollary~\ref{cor:Ramrowsum-nonneg}.  If $n$ is odd, the multiplicity is $\sum_{k|n} c_1(k) =\tau(n)$. 

If $n$ is even, the multiplicity equals the row sum corresponding to the divisor 2, and from the proof of Theorem~\ref{thm:FowlerGarciaKaraali}, this is 
\[f(n,2,2) \prod_{q>2} f(n,2,q).\]
Let  $n=2^\alpha m$, $m$ odd. Then   $f(n,2,2)=(\alpha-1+1)\phi(2) -\lfloor 1\rfloor=\alpha-1$.
 In particular this is zero if $n$ is twice an odd number. 

For primes $q\ne 2$ which divide $n$, say $n=q^i m$, $(q,m)=1$, we have, since $(q,2)=1$, 
$f(n,2,q)=(i +1)\phi(2) =i +1.$ 
It follows that $\prod_{q>2} f(n,2,q)=\prod_{q\ne 2} (i+1)=\prod_{q\ne 2} \tau(q^i)
=\tau(\tfrac{n}{2^\alpha})$.  Finally observe that $(\alpha-1)\tau(\tfrac{n}{2^\alpha})=\tau(\tfrac{n}{4})$ if $\alpha\ge 2$. 

 Items (3) and (4) follow by using Frobenius Reciprocity and computing the multiplicity of the trivial  (respectively the sign) representation in the restriction of $\Phi_n$ to $S_{n-1}$, which, from Corollary~\ref{cor:restrict},  consists of $n$ copies of the regular representation.
\end{proof}

\begin{prop}\label{prop:all-irreps} The representation $\Phi_n$ contains all the irreducibles except when $n$ is twice an odd number, in which case it contains all the irreducibles except for the sign.
\end{prop}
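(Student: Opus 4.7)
The approach is to use the decomposition $\Phi_n = \sum_{k \mid n} a_k(n)\, \ell_n^{(n/k)}$ from Theorem~\ref{thm:Chapoton-Shareshian}. Reindexing via $r = n/k$, Corollary~\ref{cor:Ramrowsum-nonneg} says that $\ell_n^{(r)}$ appears with a positive coefficient in $\Phi_n$, for $r \mid n$, precisely when $n$ is odd or $r$ is even. Hence an irreducible $\chi^\lambda$ is absent from $\Phi_n$ if and only if it fails to appear in every available $\ell_n^{(r)}$. By Swanson's Theorem~\ref{thm:Swanson}, the only candidates for such $\lambda$ are the generic families $(n)$, $(1^n)$, $(n-1,1)$, $(2, 1^{n-2})$, together with the sporadic partitions $(2,2)$ at $n=4$ and $(2,2,2), (3,3)$ at $n=6$.

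I would then split into three cases. When $n$ is odd, every divisor $r$ of $n$ is odd, so every $\ell_n^{(r)}$ contributes to $\Phi_n$, and Swanson's theorem places each generic family in at least one such term: $(n)$ and $(1^n)$ live in $\ell_n^{(n)}$, while $(n-1,1)$ and $(2,1^{n-2})$ live in $\ell_n^{(1)}$; the sporadic cases concern only even $n$. When $4 \mid n$, the half-divisor $n/2$ is itself even, so $\ell_n^{(n/2)}$ is still available in $\Phi_n$, and since Swanson identifies $\ell_n^{(n/2)}$ as the unique Foulkes host of $(1^n)$ for even $n$, the sign representation is present. The other generic families can be sourced from $\ell_n^{(2)}$ or $\ell_n^{(n)}$ (both even divisors for $n \ge 4$), and the sporadic $(2,2)$ at $n=4$ is covered by $\ell_4^{(2)}$.

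The last case, $n = 2m$ with $m$ odd, is the delicate one. Here the available Foulkes representations in $\Phi_n$ are the $\ell_n^{(r)}$ with $r$ an even divisor of $n$; in particular $\ell_n^{(n/2)} = \ell_n^{(m)}$ is excluded. Since Swanson's theorem identifies $\ell_n^{(n/2)}$ as the only source of $(1^n)$ for even $n$, the sign is absent from $\Phi_n$. For every other problematic $\lambda$ the plan is to exhibit an even divisor of $n$ outside Swanson's exclusion set for $\lambda$: $(n)$ via $r = n$; $(n-1,1)$ and $(2, 1^{n-2})$ via $r = 2$ when $n \geq 4$ (with the base case $n = 2$ verified directly from $\Phi_2 = p_1^2 + p_2 = 2 s_{(2)}$); and for $n = 6$, the sporadic $(2,2,2)$ and $(3,3)$ via $\ell_6^{(6)}$, since $r = 6$ lies outside Swanson's exclusions for those two partitions. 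The main obstacle is precisely this last subcase --- verifying that, beyond the sign, no further irreducible is squeezed out by the loss of the odd-indexed Foulkes representations.
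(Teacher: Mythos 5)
Your proposal is correct and follows essentially the same route as the paper: expand $\Phi_n$ in the Foulkes basis with the nonnegative row-sum coefficients, use Corollary~\ref{cor:Ramrowsum-nonneg} to determine exactly which $\ell_n^{(r)}$ survive, and invoke Swanson's Theorem~\ref{thm:Swanson} to reduce to a short list of partitions checked case by case (with the same verifications at $n=4$ and $n=6$). The only cosmetic difference is that the paper dispatches the four generic families $(n)$, $(1^n)$, $(n-1,1)$, $(2,1^{n-2})$ by citing the explicit multiplicities already computed in Proposition~\ref{prop:triv-sign-Rn0}, whereas you re-derive them directly from Swanson's exclusion sets by exhibiting a surviving even divisor.
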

\begin{proof}  Fix a partition $\lambda$ of $n$. The multiplicity of the $\mathfrak{S}_n$-irreducible indexed by $\lambda$ is $\sum_{k|n} a_{\frac{n}{k}}(n) \langle \ell_n^{(k)}, s_\lambda\rangle. $ By Corollary~\ref{cor:Ramrowsum-nonneg}, $a_{\frac{n}{k}}(n)$ is strictly positive except when $n$ is even and $k$ is odd. 

By Proposition~\ref{prop:triv-sign-Rn0}   and Theorem~\ref{thm:Swanson}, we need only check that the multiplicity is positive for the following three cases: \mbox{$\lambda=(2,2), (2,2,2), (3,3)$}.

For $n=4$, we have $\ch\,\Phi_4= a_1(4) \ell_4^{(4)} + a_2(4)\ell_4^{(2)}$ since $a_4(4)=0$, and we know $a_1(4)=\tau(4)>0, a_2(4)>0$.   By Theorem~\ref{thm:Swanson}, the irreducible indexed by $(2,2)$ appears in $\ell_4^{(4)}$ and $\ell_4^{(2)}$, so we are done.

For $n=6$, we have $\ch\,\Phi_6= a_1(6) \ell_6^{(6)} + a_3(6) \ell_6^{(2)}$ since $a_2(6)=0=a_6(6)$.  Again 
the irreducible indexed by $(2,2,2)$ appears in both $\ell_6^{(6)}$ and $\ell_6^{(2)}$, while 
$(3,3)$ appears in $\ell_6^{(6)}$, so we are done.
\end{proof}

\begin{rem}  Note that $\frac{1}{n}\ch\, \Phi_n$= $\frac{1}{n} \sum_{d|n} p_d^{\frac{n}{d}}$ is not in general the Frobenius characteristic of a true $\mathfrak{S}_n$-module (cf. the Foulkes representations).  
We have the following decompositions into Schur functions for $\ch\,\Phi_n$:
\begin{align*}  \ch\,\Phi_2&= 2 s_{(2)} \\
                        \ch\,\Phi_3&=2 s_{(3)}+ s_{(2,1)} + 2 s_{(1^3)} \\
                        \ch\,\Phi_4&= 3 s_{(4)} + s_{(3,1)} + 4 s_{(2^2)} + 3 s_{(2,1^2)}+                           
                                          s_{(1^4)}\\
                        \ch\,\Phi_5&= 2 s_{(5)} + 3 s_{(4,1)}  +  5 s_{(3,2)} + 7 s_{(3,1^2)} + 5 s_{(2^2,1)} + 3 s_{(2,1^3)} + 2 s_{(1^5)} \\
                     \ch\, \Phi_6&= 4 s_{(6)} + 2 s_{(5,1)} + 12 s_{(4,2)} + 10 s_{(4,1^2)} + 4 s_{(3^2)} + 14 s_{(3,2,1)} + 12 s_{(3,1^3)} + 10 s_{(2^3)} + 6 s_{(2^2,1^2)} + 6 s_{(2, 1^4)}  
\end{align*}
\end{rem}

\section{The sum $\sum_{d|n} c_d(\tfrac{n}{d}) \, p_d^{\tfrac{n}{d}}$}\label{sec:weightedRectangular}

In this section we will prove the following variation of Theorem~\ref{thm:Chapoton-Shareshian}.  Recall from Definition~\ref{defn:Schur-pos} that we have defined a symmetric function to be Schur positive if it is a nonnegative integer combination of Schur functions.
\begin{theorem}\label{thm:DiagRamSum}  The symmetric function 
\begin{equation} R_n:=\sum_{d|n} c_d (\tfrac{n}{d}) p_d^{\tfrac{n}{d}}
\end{equation}
is Schur positive.  Moreover, if $T_o$ and $T_e$ are disjoint sets of primes such that 
$n=n_o n_e$ where $n_o=\prod_{q\in T_o} q^{2 a_q+1}$, $n_e=\prod_{q\in T_e} q^{2 b_q}$ for nonnegative integers $a_q, b_q$, then the symmetric function 
\[\frac{1}{n_o\sqrt{n_e}} R_n\]
is also Schur positive.
\end{theorem}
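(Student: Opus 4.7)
My plan is to express $R_n$ in the Foulkes basis and show the resulting coefficients are nonnegative by a clean prime-by-prime factorization.

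First, I would apply Proposition~\ref{prop:basis}, specifically equation~\eqref{eqn:lin-comb-p-to-ell}, with $\alpha(d) = c_d(n/d)$. This immediately yields
\[
R_n = \sum_{k\mid n} b_k(n)\, \ell_n^{(n/k)}, \qquad b_k(n) := \sum_{d\mid n} c_k(\tfrac{n}{d})\, c_d(\tfrac{n}{d}).
\]
Since each $\ell_n^{(n/k)}$ is Schur positive, it suffices to show $b_k(n)\ge 0$ for every $k\mid n$, and to compute $b_k(n)$ exactly to deduce the divisibility claim.

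The key step is that $b_k(n)$ factors over the primes dividing $n$. Writing $n=\prod_q q^{\alpha_q}$, $k=\prod_q q^{\gamma_q}$, $d=\prod_q q^{\beta_q}$, Corollary~\ref{cor:Factorisation} applied to both Ramanujan sums gives
\[
c_k(\tfrac{n}{d})\, c_d(\tfrac{n}{d}) \;=\; \prod_q c_{q^{\gamma_q}}(q^{\alpha_q-\beta_q})\, c_{q^{\beta_q}}(q^{\alpha_q-\beta_q}),
\]
so summing $d\mid n$ (i.e.\ each $\beta_q$ independently) yields $b_k(n)=\prod_q g_{\gamma_q,\alpha_q}(q)$ where
\[
g_{\gamma,\alpha}(q) \;:=\; \sum_{\beta=0}^{\alpha} c_{q^{\gamma}}(q^{\alpha-\beta})\, c_{q^{\beta}}(q^{\alpha-\beta}).
\]
I would then evaluate $g_{\gamma,\alpha}(q)$ by Corollary~\ref{cor:prime-powers}, which forces $c_{q^{\beta}}(q^{\alpha-\beta})$ to be $\phi(q^{\beta})$ for $\beta\le \alpha/2$, $-q^{\beta-1}$ when $2\beta=\alpha+1$, and $0$ otherwise; similarly for $c_{q^{\gamma}}(q^{\alpha-\beta})$. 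The routine (but crucial) calculation I expect to give:
\[
g_{\gamma,\alpha}(q) = \begin{cases} \phi(q^{\gamma})\, q^{\alpha/2} & \text{if $\alpha$ is even and $0\le \gamma\le \alpha/2$,}\\ q^{\alpha} & \text{if $\alpha$ is odd and $\gamma = (\alpha+1)/2$,}\\ 0 & \text{otherwise.}\end{cases}
\]
The cancellations in the middle cases (which make $g_{\gamma,\alpha}(q)=0$ outside the stated ranges) use $\sum_{\beta=0}^{t}\phi(q^\beta)=q^t$; this is where the arithmetic is delicate but short, and is the main technical obstacle.

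Granted this formula, nonnegativity of every $b_k(n)$ is immediate, yielding Schur positivity of $R_n$. For the refined statement, observe that $b_k(n)=0$ unless $\gamma_q=a_q+1$ for every $q\in T_o$ and $0\le \gamma_q\le b_q$ for every $q\in T_e$. Setting $k_o:=\prod_{q\in T_o} q^{a_q+1}$, every surviving $k$ has the form $k=k_o k_e$ with $k_e$ a divisor of $\sqrt{n_e}=\prod_{q\in T_e}q^{b_q}$, and the product formula gives $b_k(n)=n_o\sqrt{n_e}\,\phi(k_e)$. Therefore
\[
\frac{1}{n_o\sqrt{n_e}}\,R_n \;=\; \sum_{k_e\mid \sqrt{n_e}} \phi(k_e)\, \ell_n^{(n/(k_o k_e))},
\]
which is a nonnegative integer combination of Foulkes characteristics, and hence Schur positive, completing the proof.
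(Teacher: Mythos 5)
Your proposal is correct and follows essentially the same route as the paper: expand $R_n$ in the Foulkes basis via Proposition~\ref{prop:basis}, reduce the coefficients $Y[n,k]$ to prime-power data by multiplicativity (Corollary~\ref{cor:Factorisation}), and evaluate the prime-power sums with Corollary~\ref{cor:prime-powers} and $\sum_{\beta=0}^{t}\phi(q^{\beta})=q^{t}$; your unified formula for $g_{\gamma,\alpha}(q)$ agrees exactly with the paper's Lemmas~\ref{lem:n-odd-prime-power} and~\ref{lem:n-even-prime-power}, and your closed form $\tfrac{1}{n_o\sqrt{n_e}}R_n=\sum_{k_e\mid\sqrt{n_e}}\phi(k_e)\,\ell_n^{(n/(k_ok_e))}$ is consistent with Corollaries~\ref{cor:Thm-n-odd-prime-power} and~\ref{cor:Thm-n-even-prime-power}. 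The only difference is organizational (the paper treats odd prime powers, even prime powers, and the square-free part in separate lemmas), so nothing substantive is missing.
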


The  following special case of  Theorem~\ref{thm:DiagRamSum} follows immediately from Proposition~\ref{prop:Ramsum0pm1} (see Section~\ref{sec:Foulkes-reps} for the definition of $Lie_n$):
\begin{equation}\label{eqn:n-is-SqFree}
\text{When  $n$ is square-free,}\  R_n=n\cdot \ell_n^{(1)}=n Lie_n.
\end{equation}
Using Proposition~\ref{prop:basis} and Eqn.~\eqref{eqn:lin-comb-p-to-ell} with $\alpha(d)=c_d(\tfrac{n}{d})$, we see that 
$R_n=\sum_{k|n} Y[n,k] \ell^{(\tfrac{n}{k})},$ where 
\begin{equation}\label{eq:KeySum} Y[n,k]:=\sum_{d|n} c_k(\tfrac{n}{d}) c_d(\tfrac{n}{d}), \ k|n.
\end{equation}

Theorem~\ref{thm:DiagRamSum} will  follow if we can show that 
\begin{equation}\label{eq:DiagRam} Y[n,k] \text{ is a nonnegative integer for all } k|n, n\ge 1.
\end{equation}

Note that $Y[1,k]=Y[1,1]=1;$  also, from Proposition~\ref{prop:trM_n}, 

\[Y[n,1]=\begin{cases} \sqrt{n}, \text{ if $n$ is a perfect square}, \\
                                                                       0, \text{ otherwise}. \end{cases}\]
Moreover, if $n=q,$ $q$ prime, so that $k=1,q$, then 
\[Y[n,n]=Y[q,q]=c_q(q) c_1(q)+ c_q(1) c_q(1)=(q-1)+ 1=q. \]

The remainder of this section is devoted to establishing \eqref{eq:DiagRam}.

Note that 
\begin{equation}\label{eq:KeyExpansion}\sum_{k|n} Y[n,\tfrac{n}{k}] \ell_n^{(k)}=\sum_{d|n} c_d(\tfrac{n}{d}) p_d^{\tfrac{n}{d}}.\end{equation}

First we show that the multiplicative property (Corollary~\ref{cor:Factorisation})  of the  Ramanujan sum is inherited by the integers $Y[n,k]$:

\begin{lemma}\label{lem:multiplicativity}  Let $n=n_1 n_2,$ where $n_1, n_2$  are relatively prime.  Then 
every divisor $k$ of $n$ factors uniquely into $k=k_1k_2$ where $k_i|n_i, i=1,2,$
 and
\begin{equation}\label{eq:multiplicativity} Y[n,k] = Y[n_1, k_1] \cdot Y[n_2,k_2]. \end{equation}
\end{lemma}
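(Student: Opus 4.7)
The plan is to substitute the definition of $Y[n,k]$ and split every Ramanujan sum in sight using the multiplicativity Corollary~\ref{cor:Factorisation}. By definition,
\[
Y[n,k] = \sum_{d|n} c_k(\tfrac{n}{d})\, c_d(\tfrac{n}{d}).
\]
Since $\gcd(n_1,n_2)=1$, every divisor $d$ of $n$ factors uniquely as $d=d_1d_2$ with $d_i\mid n_i$, and consequently $\tfrac{n}{d} = \tfrac{n_1}{d_1}\cdot\tfrac{n_2}{d_2}$. Thus the sum reindexes as a double sum over $d_1\mid n_1$ and $d_2\mid n_2$.

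The crux is to apply Corollary~\ref{cor:Factorisation} twice. First, to split $c_{d_1d_2}(\tfrac{n_1}{d_1}\tfrac{n_2}{d_2})$ we must check that $d_1\cdot\tfrac{n_1}{d_1}$ and $d_2\cdot\tfrac{n_2}{d_2}$ are coprime; but these are just $n_1$ and $n_2$, which are coprime by hypothesis. So
\[
c_{d_1d_2}\!\left(\tfrac{n_1}{d_1}\tfrac{n_2}{d_2}\right) = c_{d_1}(\tfrac{n_1}{d_1})\, c_{d_2}(\tfrac{n_2}{d_2}).
\]
Second, to split $c_{k_1k_2}(\tfrac{n_1}{d_1}\tfrac{n_2}{d_2})$ we need $k_1\cdot\tfrac{n_1}{d_1}$ coprime to $k_2\cdot\tfrac{n_2}{d_2}$. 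Since $k_i$ and $\tfrac{n_i}{d_i}$ both divide $n_i$, the first product divides $n_1^2$ and the second divides $n_2^2$, and $\gcd(n_1^2,n_2^2)=1$. Hence
\[
c_{k_1k_2}\!\left(\tfrac{n_1}{d_1}\tfrac{n_2}{d_2}\right) = c_{k_1}(\tfrac{n_1}{d_1})\, c_{k_2}(\tfrac{n_2}{d_2}).
\]

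Plugging both factorizations back in, the summand separates as a product of a function of $d_1$ and a function of $d_2$, so the double sum decouples:
\[
Y[n,k] = \left(\sum_{d_1\mid n_1} c_{k_1}(\tfrac{n_1}{d_1})\, c_{d_1}(\tfrac{n_1}{d_1})\right)\!\left(\sum_{d_2\mid n_2} c_{k_2}(\tfrac{n_2}{d_2})\, c_{d_2}(\tfrac{n_2}{d_2})\right) = Y[n_1,k_1]\cdot Y[n_2,k_2].
\]
The only real content is the coprimality bookkeeping needed to invoke Corollary~\ref{cor:Factorisation}, and even that is essentially immediate once one notes that $d_i$ and $n_i/d_i$ are both trapped inside $n_i$. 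I do not anticipate any genuine obstacle.
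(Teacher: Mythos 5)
Your proof is correct and follows essentially the same route as the paper: factor each divisor $d$ as $d_1d_2$, apply Corollary~\ref{cor:Factorisation} to split both Ramanujan sums in each summand, and observe that the double sum decouples. Your explicit verification of the coprimality hypotheses (via $n_1^2$ and $n_2^2$) is a welcome bit of care that the paper leaves implicit.
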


\begin{proof} Each divisor $d$ of $n$ can be written uniquely as $d=d_1d_2$ with each $d_i$ a divisor of $n_i$.
Hence  we have 
\[Y[n,k]= \sum_{d_i|n_i, i=1,2} c_{k_1k_2}(\tfrac{n_1n_2}{d_1d_2})\  c_{d_1d_2}(\tfrac{n_1n_2}{d_1d_2}).\]
Apply Corollary~\ref{cor:Factorisation} to factor each of the two Ramanujan sums in each summand  above:
\[ c_{k_1k_2}(\tfrac{n_1n_2}{d_1d_2})=c_{k_1}(\tfrac{n_1}{d_1})\, c_{k_2}(\tfrac{n_2}{d_2});\]
\[ c_{d_1d_2}(\tfrac{n_1n_2}{d_1d_2})=c_{d_1}(\tfrac{n_1}{d_1})\, c_{d_2}(\tfrac{n_2}{d_2}).\]
It follows that 
\[Y[n,k] = \left(\sum_{d_1|n_1} c_{k_1}(\tfrac{n_1}{d_1})\,c_{d_1}(\tfrac{n_1}{d_1})\right)\cdot 
 \left(\sum_{d_2|n_2} c_{k_2}(\tfrac{n_2}{d_2})\,c_{d_2}(\tfrac{n_2}{d_2})\right),\]
as claimed.
\end{proof}
If $\{q_i\}_{i=1}^r$ are distinct primes and $S$ is a subset of $[r]:=\{1,2,\ldots, r\}$, set $q_S:=\prod_{i\in S} q_i,$ so that in particular 
$q_{[r]}:=\prod_{i=1}^r q_i.$  Clearly $q_S|q_T \iff S\subseteq T.$ 

\begin{lemma}\label{lem:n-has-SqFreepart} Let $n=x\prod_{i=1}^r q_i$ where $r\ge 1,$ $\{q_i\}_{i=1}^r$ are distinct primes, and $x$ is an integer relatively prime to $q_i$ for all $i.$  
Then every divisor $k$ of $n$ is of the form $k=zq_T$ for a divisor $z$ of $x$ and $T\subseteq [r],$ and we have 
\begin{equation}\label{eq:n-has-SqFreepart} Y[n,k]=Y[n, zq_T] =Y[x,z]\,  q_{[r]} \, \delta_{T, [r]}.
\end{equation}
\end{lemma}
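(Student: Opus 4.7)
The plan is to apply Lemma~\ref{lem:multiplicativity} repeatedly, reducing the computation of $Y[n,k]$ to the evaluation of $Y$ at prime arguments.

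First, since $x$ is coprime to each $q_i$ and the $q_i$ are distinct primes, the factorization $n = x\cdot q_1\cdot q_2\cdots q_r$ is into pairwise coprime factors. Any divisor $k$ of $n$ therefore factors uniquely as $k = z\cdot \prod_{i\in T} q_i = zq_T$ where $z\mid x$ and $T\subseteq[r]$. Iterating Lemma~\ref{lem:multiplicativity} yields
\[
Y[n, zq_T] \;=\; Y[x,z]\cdot \prod_{i=1}^{r} Y[q_i, k_i],
\]
where $k_i = q_i$ if $i\in T$ and $k_i = 1$ if $i\notin T$.

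Next I would invoke the two base cases that the authors have already computed in the discussion preceding the lemma: for any prime $q$ one has $Y[q,q]=q$ and, since a prime is not a perfect square, $Y[q,1]=0$ by the formula for $Y[n,1]$ derived from Proposition~\ref{prop:trM_n}. Substituting these values into the displayed product, the factor corresponding to index $i$ equals $q_i$ if $i\in T$ and $0$ if $i\notin T$. Consequently the product vanishes unless $T=[r]$, and when $T=[r]$ it equals $\prod_{i=1}^r q_i = q_{[r]}$. Combining these two outcomes gives
\[
Y[n, zq_T] \;=\; Y[x,z]\cdot q_{[r]}\cdot \delta_{T,[r]},
\]
as claimed.

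There is no real obstacle here: the lemma is essentially a bookkeeping corollary of the multiplicativity Lemma~\ref{lem:multiplicativity} together with the already-established base values $Y[q,1]=0$ and $Y[q,q]=q$ for prime $q$. The only point requiring minor care is to justify applying multiplicativity inductively across the $r+1$ pairwise coprime factors $x, q_1, \dots, q_r$ rather than a single splitting $n=n_1 n_2$; this is immediate by induction on $r$, since after peeling off one prime factor $q_r$ the remaining factor $x\prod_{i<r} q_i$ is still coprime to $q_r$.
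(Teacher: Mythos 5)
Your proof is correct. It shares the paper's first step---using Lemma~\ref{lem:multiplicativity} to split off the square-free part---but then diverges in how the square-free factor is evaluated. The paper stops the factorization at $Y[n,zq_T]=Y[x,z]\,Y[q_{[r]},q_T]$ and computes $Y[q_{[r]},q_T]$ in one stroke: it rewrites $c_{q_S}(\tfrac{q_{[r]}}{q_S})$ as $\mu(q_S)$ via the special case \eqref{eqn:SpecCasephimu} and then invokes the orthogonality relation of Proposition~\ref{prop:KeyFact} to get $q_{[r]}\,\delta_{T,[r]}$. You instead push the multiplicativity all the way down to single primes and feed in the two base values $Y[q,q]=q$ and $Y[q,1]=0$, both of which are indeed already established in the text preceding the lemma (the latter from Proposition~\ref{prop:trM_n} since a prime is not a perfect square). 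Both arguments are short and fully rigorous; yours is arguably more elementary in that it avoids Proposition~\ref{prop:KeyFact} entirely, at the cost of relying on the specific base-case computations, while the paper's version treats the whole square-free block uniformly and is the form that the authors later adapt to $Y_u$ in Lemma~\ref{lem:n-has-SqFreepart-u} (where for even $u$ the relevant sum is no longer a M\"obius-weighted one and the orthogonality identity must be replaced by a row-sum argument). Your inductive justification for iterating the two-factor multiplicativity lemma across the pairwise coprime factors $x,q_1,\dots,q_r$ is the right small point to flag, and it is handled correctly.
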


\begin{proof}  From Lemma~\ref{lem:multiplicativity} we have 
\[Y[n, zq_T] =Y[x,z]\, Y[q_{[r]}, q_T].\]

But \[Y[q_{[r]}, q_T]
=\sum_{S\subseteq [r]}  c_{q_T}(\tfrac{q_{[r]}}{q_S})c_{q_S}(\tfrac{q_{[r]}}{q_S})
=  \sum_{S\subseteq [r]}  c_{q_T}(\tfrac{q_{[r]}}{q_S})\,\mu(q_S),\]
where the last reduction is a consequence of the special case~\eqref{eqn:SpecCasephimu} 
since 
$q_S$ and $\tfrac{q_{[r]}}{q_S}$ are relatively prime.  Finally the last sum equals $q_{[r]} \, \delta_{T, [r]}$ by Proposition~\ref{prop:KeyFact}.  \end{proof}

\begin{lemma}\label{lem:n-odd-prime-power} Let $q$ be prime and let $n=q^{2r+1}, r\ge 0.$ Then \[Y[n, k]=\begin{cases} n, \ k=q^{r+1},\\
                                             0, \ \text{otherwise}. \end{cases}\]
\end{lemma}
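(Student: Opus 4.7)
The plan is to carry out a direct computation of $Y[n,q^s]$ using the explicit formula for Ramanujan sums at prime-power modulus given in Corollary~\ref{cor:prime-powers}. Every divisor of $n = q^{2r+1}$ has the form $q^j$ with $0 \le j \le 2r+1$, so writing $k = q^s$ and $d = q^j$ (and hence $n/d = q^{2r+1-j}$), we have
\[
Y[n, q^s] \;=\; \sum_{j=0}^{2r+1} c_{q^s}(q^{2r+1-j})\, c_{q^j}(q^{2r+1-j}).
\]

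First I would evaluate the ``diagonal'' factor $c_{q^j}(q^{2r+1-j})$ by Corollary~\ref{cor:prime-powers}: it equals $\phi(q^j)$ when $j \le r$, equals $-q^r$ when $j = r+1$, and vanishes for $j \ge r+2$. Thus the sum reduces to at most $r+2$ nonzero terms,
\[
Y[n, q^s] \;=\; \sum_{j=0}^{r} \phi(q^j)\, c_{q^s}(q^{2r+1-j}) \;-\; q^r\, c_{q^s}(q^r).
\]
For the other factor, Corollary~\ref{cor:prime-powers} gives $c_{q^s}(q^t) = \phi(q^s)$ when $t \ge s$, $c_{q^s}(q^t) = -q^{s-1}$ when $t = s-1$, and $c_{q^s}(q^t) = 0$ when $t \le s-2$.

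Next I would split on the value of $s$. When $s \le r$, every occurrence of $c_{q^s}(q^{2r+1-j})$ in the displayed sum (including the final term $c_{q^s}(q^r)$) takes the value $\phi(q^s)$, so the telescoping identity $\sum_{j=0}^{r} \phi(q^j) = q^r$ yields $Y[n, q^s] = \phi(q^s)(q^r - q^r) = 0$. When $s \ge r+2$, the endpoint term at $j = r+1$ drops out because $c_{q^s}(q^r) = 0$; the ``generic'' block over $j = 0, \ldots, 2r+1-s$ contributes $\phi(q^s)\, q^{2r+1-s} = q^{2r}(q-1)$, and the single boundary term at $j = 2r+2-s$ contributes $-q^{s-1}\phi(q^{2r+2-s}) = -q^{2r}(q-1)$; these cancel. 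The remaining case $s = r+1$ is where the cancellation fails: the generic block yields $\phi(q^{r+1}) q^r = q^{2r}(q-1)$, while the boundary term $-q^r \cdot c_{q^{r+1}}(q^r) = -q^r \cdot (-q^r) = q^{2r}$ is left over, giving $Y[n, q^{r+1}] = q^{2r+1} = n$.

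The computations are entirely mechanical once Corollary~\ref{cor:prime-powers} is in hand; the only real obstacle is bookkeeping, namely making sure that the boundary index $j = r+1$ on the diagonal factor and the boundary relation $t = s-1$ on the off-diagonal factor are combined without double-counting, and recognizing that the single telescoping identity $\sum_{j=0}^{r} \phi(q^j) = q^r$ is the engine behind every cancellation. It is worth noting that the $s=0$ case also falls out of Proposition~\ref{prop:trM_n}, since $Y[n,1]$ is the trace of $M_n$ and $n=q^{2r+1}$ is never a perfect square.
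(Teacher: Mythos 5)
Your proposal is correct and follows essentially the same route as the paper's own proof: both reduce the sum via Corollary~\ref{cor:prime-powers} to the range $0\le j\le r+1$, split into the three cases $s\le r$, $s=r+1$, $s\ge r+2$, and drive every cancellation with the identity $\sum_{a=0}^{r}\phi(q^a)=q^r$. All of your boundary evaluations (the $-q^r$ term at $j=r+1$ and the $-q^{s-1}$ term at $j=2r+2-s$) match the paper's computation.
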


\begin{proof} The case $n=q$ has already been confirmed.  Observe the following fact: 
\begin{equation}\label{eq:sum-of-phi}\sum_{a=0}^r\phi(q^a)=q^r.\end{equation}

 Let $k=q^i, 0\le i\le 2r+1.$ Then 
\begin{equation}\label{eq:sumYnk}Y[n,k]= \sum_{a=0}^{2r+1} c_{q^i}(q^{2r+1-a}) c_{q^a}(q^{2r+1-a}).
\end{equation}
By Corollary~\ref{cor:prime-powers}, the second Ramanujan sum $c_{q^a}(q^{2r+1-a})$ in each summand on the right side of ~\eqref{eq:sumYnk}
is nonzero only if $a\le 2r+2-a$, i.e., only if $a\le r+1,$ so we in fact have 
\begin{equation}\label{temp1}Y[n,q^i]= \sum_{a=0}^{r+1} c_{q^i}(q^{2r+1-a}) c_{q^a}(q^{2r+1-a})
=\sum_{a=0}^{r} c_{q^i}(q^{2r+1-a}) \phi(q^a) +  c_{q^i}(q^{r}) (-q^r),
\end{equation}
again by  Corollary~\ref{cor:prime-powers}.

Let $i=r+1.$ Then $i=r+1\le 2r+1-a$ for all $0\le a\le r,$ so $ c_{q^i}(q^{2r+1-a})=\phi(q^i)$ and  \eqref{temp1} gives
\begin{align*} Y[n,q^{r+1}]&= \sum_{a=0}^r \phi(q^i) \phi(q^a) -q^r (-q^{i-1})\\
&=\phi(q^i) \sum_{a=0}^r  \phi(q^a) +q^{2r}\\
&=q^r(q-1) q^r+q^{2r}=q^{2r+1}=n,
\end{align*}
where we have used \eqref{eq:sum-of-phi} in the last line.

Now assume $i\le r.$ The least value of $2r+1-a,$  for $a=0,1,\ldots, r,$ is $r+1$
and this is greater than $i$ for any $i\le r,$ so from Corollary~\ref{cor:prime-powers}, equation \eqref{temp1} becomes
\[Y[n, q^i]=\sum_{a=0}^{r} \phi(q^i) \phi(q^a) -q^r\phi(q^i) =
\phi(q^i) [\sum_{a=0}^r \phi(q^a) -q^r]=0.
\]

Finally let $i\ge r+2.$ Then $c_{q^i}(q^r)=0$ and 
\[c_{q^i}(q^{2r+1-a})\ne 0\iff i\le 2r+2-a\iff a\le 2r+2-i.\]
But $i\ge r+2$ implies $2r+2-i\le r,$ so \eqref{temp1} becomes
 \begin{align*} Y[n, q^i]&=\sum_{a=0}^{2r+2-i} c_{q^i}(q^{2r+1-a}) \phi(q^a) \\
&=\sum_{a=0}^{2r+1-i} c_{q^i}(q^{2r+1-a}) \phi(q^a) -(q^{i-1} \phi(q^{2r+2-i}))\\
&=\sum_{a=0}^{2r+1-i} \phi(q^i)\phi(q^a) -(q^{i-1} \phi(q^{2r+2-i}))\\
&=\phi(q^i) q^{2r+1-i} -q^{i-1} q^{2r+1-i} (q-1) =0. \qedhere 
\end{align*}

\end{proof}

\begin{cor}\label{cor:Thm-n-odd-prime-power} Let $q$ be prime and let $n=q^{2r+1}, r\ge 0.$ Then the symmetric function $R_n$ of Theorem~\ref{thm:DiagRamSum} equals 
\[R_n= n\ell_n^{(q^r)}\] 
and is thus Schur positive.  Moreover the symmetric function $\frac{1}{n} R_n$ is also Schur positive.
\end{cor}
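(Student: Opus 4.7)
The plan is to deduce the corollary directly from Lemma~\ref{lem:n-odd-prime-power} via the expansion \eqref{eq:KeyExpansion}, which expresses $R_n$ as a linear combination of Foulkes characteristics with coefficients $Y[n,\tfrac{n}{k}]$. Since Lemma~\ref{lem:n-odd-prime-power} pins down exactly which $Y[n,\cdot]$ are nonzero when $n=q^{2r+1}$, only one Foulkes summand survives.

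Concretely, I would proceed as follows. First, specialize \eqref{eq:KeyExpansion} to $n=q^{2r+1}$, writing
\[
R_n \;=\; \sum_{k\mid n} Y[n,\tfrac{n}{k}]\,\ell_n^{(k)}.
\]
Next, apply Lemma~\ref{lem:n-odd-prime-power} to identify the nonvanishing coefficient: $Y[n,\tfrac{n}{k}]\ne 0$ if and only if $\tfrac{n}{k}=q^{r+1}$, equivalently $k=q^{r}$, and in that case $Y[n,q^{r+1}]=n$. Substituting yields $R_n = n\,\ell_n^{(q^{r})}$.

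Finally, I would invoke Theorem~\ref{thm:Foulkes}(1) and the discussion in Section~\ref{sec:Foulkes-reps}: each $\ell_n^{(r)}$ is by definition the Frobenius characteristic of the induced representation $\exp(\tfrac{2i\pi r}{n})\!\uparrow_{C_n}^{\mathfrak{S}_n}$, so it is Schur positive (a nonnegative integer combination of Schur functions). Therefore $R_n = n\,\ell_n^{(q^r)}$ is Schur positive, and moreover $\frac{1}{n}R_n = \ell_n^{(q^r)}$ is itself Schur positive, establishing both assertions.

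Since all the real work has already been carried out in Lemma~\ref{lem:n-odd-prime-power}, there is no genuine obstacle here; the only thing to be careful about is the index swap $k \leftrightarrow \tfrac{n}{k}$ between \eqref{eq:KeySum} and \eqref{eq:KeyExpansion}, which is why the surviving index is $k=q^{r}$ (giving $\ell_n^{(q^r)}$) rather than $k=q^{r+1}$.
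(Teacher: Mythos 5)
Your proposal is correct and matches the paper's (implicit) argument: the corollary is stated as an immediate consequence of Lemma~\ref{lem:n-odd-prime-power} via the expansion \eqref{eq:KeyExpansion}, and you correctly handle the index swap so that the surviving term is $\ell_n^{(q^r)}$. The final observation that $\frac{1}{n}R_n=\ell_n^{(q^r)}$ is a Foulkes characteristic, hence Schur positive, is exactly what is needed.
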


\begin{lemma}\label{lem:n-even-prime-power}Let $q$ be prime and let $n=q^{2r}, r\ge 1.$ Then \[Y[n, q^i]=\begin{cases} q^r \phi(q^i), \ 0\le i\le r,\\
                                             0, \ \text{otherwise}. \end{cases}\]
\end{lemma}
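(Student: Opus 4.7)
The plan is to mirror the proof of Lemma~\ref{lem:n-odd-prime-power} almost verbatim, since the only divisors of $n=q^{2r}$ are $q^0,q^1,\ldots,q^{2r}$ and all the needed tools (Corollary~\ref{cor:prime-powers} and the telescoping identity~\eqref{eq:sum-of-phi}) are the same. Write out
\[Y[n,q^i]=\sum_{a=0}^{2r} c_{q^i}(q^{2r-a})\,c_{q^a}(q^{2r-a}),\]
and first analyse the second factor $c_{q^a}(q^{2r-a})$ using Corollary~\ref{cor:prime-powers}. Here the middle case of that corollary, which produced the $-q^r$ correction in the odd-exponent proof, never triggers: the conditions $q^{a-1}\mid q^{2r-a}$ and $q^a\nmid q^{2r-a}$ force $a-1\le 2r-a<a$, which has no integer solution. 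Thus $c_{q^a}(q^{2r-a})=\phi(q^a)$ precisely when $a\le r$, and vanishes otherwise, collapsing the sum to
\[Y[n,q^i]=\sum_{a=0}^{r}c_{q^i}(q^{2r-a})\,\phi(q^a).\]

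Next I would split on whether $i\le r$ or $i>r$. For $0\le i\le r$, every index $a\in\{0,\ldots,r\}$ satisfies $2r-a\ge r\ge i$, so $q^i\mid q^{2r-a}$ and the first Ramanujan factor equals $\phi(q^i)$ throughout. Pulling this out and invoking~\eqref{eq:sum-of-phi} gives
\[Y[n,q^i]=\phi(q^i)\sum_{a=0}^{r}\phi(q^a)=\phi(q^i)\,q^{r},\]
which is the claimed value.

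For $r<i\le 2r$ the first factor $c_{q^i}(q^{2r-a})$ contributes in two ways: it equals $\phi(q^i)$ when $a\le 2r-i$, and it equals $-q^{i-1}$ at the single index $a=2r-i+1$ (and vanishes for larger $a$, all of which still lie in $\{0,\ldots,r\}$ because $i\ge r+1$). Combining,
\[Y[n,q^i]=\phi(q^i)\sum_{a=0}^{2r-i}\phi(q^a)\;-\;q^{i-1}\,\phi(q^{2r-i+1}),\]
and substituting $\sum_{a=0}^{2r-i}\phi(q^a)=q^{2r-i}$ together with $\phi(q^i)=q^{i-1}(q-1)$ and $\phi(q^{2r-i+1})=q^{2r-i}(q-1)$ shows the two terms cancel, giving $Y[n,q^i]=0$.

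I do not foresee a real obstacle here; the only mildly delicate point is verifying that the middle clause of Corollary~\ref{cor:prime-powers} is never active in the second Ramanujan factor (which is what makes the even prime-power calculation cleaner than the odd one), and then keeping track of the single boundary index $a=2r-i+1$ in the $i>r$ case. Everything else is bookkeeping analogous to Lemma~\ref{lem:n-odd-prime-power}.
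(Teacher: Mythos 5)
Your proof is correct and follows essentially the same route as the paper's: reduce the second Ramanujan factor to $\phi(q^a)$ supported on $a\le r$ via Corollary~\ref{cor:prime-powers}, then split on $i\le r$ versus $i>r$ and finish with $\sum_{a=0}^{m}\phi(q^a)=q^m$. The only cosmetic difference is that you rule out the $-q^{a-1}$ case of the second factor by a parity argument, where the paper simply observes that the nonvanishing condition $a\le 2r-a+1$ already forces $a\le r$ and hence $q^a\mid q^{2r-a}$.
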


\begin{proof} 
We have \[Y[n, q^i]=\sum_{a=0}^{2r} c_{q^i}(q^{2r-a}) \ (c_{q^a}(q^{2r-a}) ).\]
The second Ramanujan sum in each summand is nonzero only if $a\le 2r-a+1,$ in which case $a\le r$ and $c_{q^a}(q^{2r-a}) =\phi(q^a),$ and the first Ramanujan sum is nonzero only if $i\le 2r-a+1.$ Hence we have 
 \begin{equation}\label{temp2}
Y[n, q^i]=\sum_{a=0}^{r} c_{q^i}(q^{2r-a}) \phi(q^a)
=\sum_{a=0}^{\min(r, 2r-i+1)} c_{q^i}(q^{2r-a}) \phi(q^a) .
\end{equation}
If $i\ge r+1,$ then $2r-i+1\le r$ and \eqref{temp2} becomes
\[Y[n, q^i]
=\sum_{a=0}^{2r-i+1} c_{q^i}(q^{2r-a}) \phi(q^a)
=\sum_{a=0}^{2r-i} \phi(q^i) \phi(q^a) +(-q^{i-1})\phi(q^{2r-i+1}) .\]
As before, this reduces to 
$\phi(q^i) q^{2r-i+1}-q^{i-1}(q-1) q^{2r-i+1}=0.$

Now let $0\le i\le r.$  Then \eqref{temp2} gives
\[Y[n, q^i]=\sum_{a=0}^{r} c_{q^i}(q^{2r-a}) \phi(q^a)=\sum_{a=0}^{r} \phi(q^i) \phi(q^a)
=\phi(q^i) q^r,\]
again using Corollary~\ref{cor:prime-powers} and Equation \eqref{eq:sum-of-phi}.
\end{proof}

Hence we have:
\begin{cor}\label{cor:Thm-n-even-prime-power} Let $q$ be prime and let $n=q^{2r}, r\ge 1.$ Then the symmetric function $R_n$ of Theorem~\ref{thm:DiagRamSum} equals 
\[R_n=q^r  \sum_{i=0}^r \phi(q^i)\ell_n^{(q^{2r-i})}\] 
and is thus Schur positive.  Moreover the symmetric function $\frac{1}{\sqrt{n}} R_n$ is also Schur positive.
\end{cor}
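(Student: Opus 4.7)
The plan is to read off this corollary directly from Lemma~\ref{lem:n-even-prime-power} via the expansion~\eqref{eq:KeyExpansion}. First I would write $R_n=\sum_{k|n}Y[n,\tfrac{n}{k}]\,\ell_n^{(k)}$ from~\eqref{eq:KeyExpansion}, and then specialize to $n=q^{2r}$, whose divisors are exactly $k=q^{j}$ for $0\le j\le 2r$, so that $\tfrac{n}{k}=q^{2r-j}$.

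Next, I would substitute the values of $Y[n,q^{2r-j}]$ provided by Lemma~\ref{lem:n-even-prime-power}. The lemma says $Y[n,q^{s}]=q^{r}\phi(q^{s})$ when $0\le s\le r$ and vanishes otherwise; taking $s=2r-j$ this means only the indices $j$ with $r\le j\le 2r$ contribute, and the contribution is $q^{r}\phi(q^{2r-j})$. Reindexing via $i=2r-j$ (so $i$ runs from $0$ to $r$ as $j$ runs from $2r$ down to $r$) yields
\[
R_n \;=\; \sum_{j=r}^{2r} q^{r}\phi(q^{2r-j})\,\ell_n^{(q^{j})} \;=\; q^{r}\sum_{i=0}^{r}\phi(q^{i})\,\ell_n^{(q^{2r-i})},
\]
which is the claimed identity.

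Schur positivity is then immediate: each Foulkes characteristic $\ell_n^{(q^{2r-i})}$ is the Frobenius characteristic of a genuine (induced) $\mathfrak{S}_n$-representation and hence Schur positive by Theorem~\ref{thm:Foulkes}, while the coefficients $q^{r}\phi(q^{i})$ are nonnegative integers. For the final assertion, note that $\sqrt{n}=q^{r}$, so
\[
\tfrac{1}{\sqrt{n}}\,R_n \;=\; \sum_{i=0}^{r}\phi(q^{i})\,\ell_n^{(q^{2r-i})},
\]
which remains a nonnegative integer combination of Foulkes characteristics, and is therefore also Schur positive. There is no genuine obstacle here; all the combinatorial work has been done in Lemma~\ref{lem:n-even-prime-power}, and the corollary is simply the translation of that lemma through~\eqref{eq:KeyExpansion} together with the observation that $\sqrt{q^{2r}}=q^{r}$ divides the common factor appearing in every nonzero $Y[n,q^{s}]$.
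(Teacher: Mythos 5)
Your proposal is correct and follows the same route as the paper: the paper's proof is exactly the one-line computation $R_n=\sum_{i=0}^{2r} Y[n,q^i]\,\ell_n^{(q^{2r-i})}=\sum_{i=0}^{r} q^r\phi(q^i)\,\ell_n^{(q^{2r-i})}$, obtained by substituting Lemma~\ref{lem:n-even-prime-power} into the expansion~\eqref{eq:KeyExpansion}. Your reindexing and the observation that $\sqrt{n}=q^r$ divides every coefficient are precisely what the paper intends, just spelled out in more detail.
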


\begin{proof} We have 
\[R_n=\sum_{i=0}^{2r}  Y[n, q^i] \ell_n^{(q^{2r-i})}=\sum_{i=0}^r q^r \phi(q^i) \ell_n^{(q^{2r-i})}. \qedhere\]
\end{proof}

\begin{proof}[Proof of Theorem~\ref{thm:DiagRamSum}]  The case when $n$ is square-free was already observed in Eqn.~\eqref{eqn:n-is-SqFree}.  The general case follows from Lemmas~\ref{lem:multiplicativity}, \ref{lem:n-odd-prime-power} and ~\ref{lem:n-even-prime-power}, showing recursively that the $Y[n,k]$ are nonnegative integers for all divisors $k$ of $n.$ 
Hence the function $R_n$ is always a nonegative integer combination of the $\{\ell_n^{(m)}\}_{m|n},$  and is therefore Schur positive.  The last statement in Theorem~\ref{thm:DiagRamSum}, asserting that $R_n$ is a positive integer multiple of a Schur positive function,  follows from Eqn.~\eqref{eqn:n-is-SqFree} and the corresponding statements in  
Corollary~\ref{cor:Thm-n-odd-prime-power} and Corollary~\ref{cor:Thm-n-even-prime-power}.  \end{proof}

\begin{prop}\label{prop:mult} For the representation with Frobenius characteristic $R_n:$
\begin{enumerate}
\item The restriction to $\mathfrak{S}_{n-1}$ equals $n$ copies of the regular representation.
\item If $n$ is odd, it is self-conjugate.
\item The multiplicity of the trivial representation is $\sum_{d|n} c_d(\tfrac{n}{d}).$ It is nonzero only if $n$ is a perfect square, in which case it equals $\sqrt{n}.$  
\item The multiplicity of the sign representation equals that of the trivial representation if $n$ is odd, and if $n$ is even, it 
equals
$\left\{ \begin{array}{ll} \sqrt{n}, & n \mbox{ a perfect square,} \\ 2 \sqrt{n/2}, & n/2 \mbox{ an odd perfect square,} \\ 0, & \mbox{otherwise.} \end{array} \right.$
\item The multiplicity of the irreducible indexed by $(n-1,1)$ (respectively $(2, 1^{n-2})$) is $n$ minus the multiplicity of the trivial (respectively the sign) representation.
\end{enumerate}
\end{prop}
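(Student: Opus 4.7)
The plan is to leverage the expansion $R_n=\sum_{k|n} Y[n,k]\, \ell_n^{(n/k)}$ established in Section~\ref{sec:weightedRectangular}, combined with standard facts about the Hall inner product, the involution $\omega$, and Frobenius reciprocity. Most items reduce to one-line manipulations; the only ingredient requiring real thought is the total row-sum identity $\sum_{k|n} Y[n,k]=n$ underlying (1), which I will extract via dimension counting rather than by a direct combinatorial argument on the $Y[n,k]$.

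For (1), I first compute $\dim R_n=\langle R_n,p_1^n\rangle$. Since $\langle p_d^{n/d},p_1^n\rangle$ vanishes unless $d=1$ (in which case it equals $n!$) and $c_1(n)=1$, we get $\dim R_n=n!$. Each Foulkes representation $\ell_n^{(r)}$ has dimension $(n-1)!$ and, as noted in the proof of Corollary~\ref{cor:restrict}, restricts to exactly one copy of the regular representation of $\mathfrak{S}_{n-1}$. Comparing dimensions in the Foulkes expansion of $R_n$ forces $\sum_{k|n} Y[n,k]=n$, and so the restriction is $n$ copies of the regular representation. For (2), the identity $\omega(p_d)=(-1)^{d-1}p_d$ yields $\omega(p_d^{n/d})=(-1)^{(d-1)(n/d)}p_d^{n/d}$; when $n$ is odd every divisor $d$ of $n$ is odd, so the sign is $+1$ and $\omega(R_n)=R_n$.

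For (3) and (4), I use the dual formulas $\langle p_\mu,h_n\rangle=1$ and $\langle p_\mu,e_n\rangle=(-1)^{n-\ell(\mu)}$ for $\mu\vdash n$. The multiplicity of the trivial representation is $\sum_{d|n} c_d(n/d)$, evaluated by Proposition~\ref{prop:trM_n}. For the sign representation, the odd-$n$ case is immediate from (2), since $\omega$ swaps $s_{(n)}$ and $s_{(1^n)}$; for even $n$, the cycle type $(d^{n/d})$ has length $n/d$, so the multiplicity is $\sum_{d|n} c_d(n/d)(-1)^{n/d}$, which is exactly the signed trace computed in Proposition~\ref{prop:signed-trM_n}.

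For (5), I invoke Frobenius reciprocity. By Pieri's rule, $\mathrm{Ind}_{\mathfrak{S}_{n-1}}^{\mathfrak{S}_n}(\mathrm{triv})=s_{(n)}+s_{(n-1,1)}$, so $\langle R_n,s_{(n)}\rangle+\langle R_n,s_{(n-1,1)}\rangle$ equals the multiplicity of the trivial representation in the restriction of $R_n$ to $\mathfrak{S}_{n-1}$, which is $n$ by (1). The parallel identity $\mathrm{Ind}_{\mathfrak{S}_{n-1}}^{\mathfrak{S}_n}(\mathrm{sign})=s_{(1^n)}+s_{(2,1^{n-2})}$ gives the analogous statement for the irreducible indexed by $(2,1^{n-2})$. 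The only real obstacle is the initial dimension-counting step in (1); once $\dim R_n=n!$ is secured, all remaining assertions follow directly from the Foulkes expansion of $R_n$ together with Propositions~\ref{prop:trM_n} and~\ref{prop:signed-trM_n}.
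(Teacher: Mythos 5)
Your proof is correct and follows essentially the same route as the paper's (which leaves most details as ``clear''): items (3) and (4) via Propositions~\ref{prop:trM_n} and~\ref{prop:signed-trM_n}, and item (5) via Frobenius reciprocity applied to item (1). The only cosmetic difference is in item (1), where you detour through the Foulkes expansion and a dimension count; the more direct observation is that restriction to $\mathfrak{S}_{n-1}$ is skewing by $p_1$, which annihilates $p_d^{\tfrac{n}{d}}$ for $d>1$ and sends $c_1(n)\,p_1^{n}$ to $n\,p_1^{n-1}$, the characteristic of $n$ copies of the regular representation.
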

\begin{proof} The first two parts are clear. Items (3) and (4) follow from Proposition~\ref{prop:trM_n} and Proposition~\ref{prop:signed-trM_n} respectively.  The last statement is a consequence of Item (1) and Frobenius reciprocity.
\end{proof}

\section{The sums $\sum_{d|n} (c_d(\tfrac{n}{d}))^u p_d^{\tfrac{n}{d}}, \ \text{arbitrary }u\ge 1$}

Let $u$ be a nonnegative integer and define 

\[R_{n,u}:=\sum_{d|n} (c_d(\tfrac{n}{d}))^u p_d^{\tfrac{n}{d}},\]
and 
\begin{equation}\label{eq:KeySum-u}Y_u[n,k]:=\sum_{d|n} c_k( \tfrac{n}{d} )(c_d(\tfrac{n}{d}))^u, \, k|n.\end{equation}

Thus $R_{n,0}$ is the symmetric function $\Phi_n$ of Section~\ref{sec:Rectangular}, and $R_{n,1}$ is the symmetric function $R_n$ of Section~\ref{sec:weightedRectangular}.

As before,
\[R_{n,u}=\sum_{k|n} Y_u[n,\tfrac{n}{k}]\, \ell_n^{(k)},\]
and hence, if all the $Y_u[n,k]$ are nonnegative integers,  the symmetric function $R_{n,u}$ will be Schur positive.

 Lemmas~\ref{lem:multiplicativity} and~\ref{lem:n-has-SqFreepart} 
 generalise in a straightforward manner to the $Y_u$ for arbitrary nonnegative integers $u$, as follows.

Again we show that the multiplicative property Corollary~\ref{cor:Factorisation}  of the  Ramanujan sum is inherited by the integers $Y_u[n,k]$. Exactly as in  Lemma~\ref{lem:multiplicativity}, we have the following.

\begin{lemma}\label{lem:multiplicativity-u}  Let $n=n_1 n_2,$ where $n_1, n_2$  are relatively prime.  Then 
every divisor $k$ of $n$ factors uniquely into $k=k_1k_2$ where $k_i|x_i, i=1,2,$
 and
\begin{equation}\label{eq:multiplicativity-u} Y_u[n,k] = Y_u[n_1, k_1] \cdot Y_u[n_2,k_2]. \end{equation}
\end{lemma}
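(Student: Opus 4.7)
The plan is to mimic the proof of Lemma~\ref{lem:multiplicativity} verbatim, since the only change from $u=1$ to general $u$ is that the factor $c_d(\tfrac{n}{d})$ is replaced by its $u$-th power, and the multiplicativity identity of Corollary~\ref{cor:Factorisation} is preserved under raising to any integer power. First I would observe that, because $(n_1, n_2)=1$, each divisor $d$ of $n$ factors uniquely as $d=d_1d_2$ with $d_i|n_i$, and likewise each $k=k_1k_2$; moreover $\tfrac{n}{d}=\tfrac{n_1}{d_1}\cdot\tfrac{n_2}{d_2}$ is the analogous coprime factorization of $\tfrac{n}{d}$.

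Next I would apply Corollary~\ref{cor:Factorisation} twice to each summand of \eqref{eq:KeySum-u}. For $c_k(\tfrac{n}{d})=c_{k_1k_2}(\tfrac{n_1}{d_1}\cdot\tfrac{n_2}{d_2})$, the hypothesis of Corollary~\ref{cor:Factorisation} asks that $k_1\tfrac{n_1}{d_1}$ and $k_2\tfrac{n_2}{d_2}$ be coprime; this is immediate, since the first integer divides $n_1^2$ and the second divides $n_2^2$, and $(n_1,n_2)=1$. The identical check works for $c_d(\tfrac{n}{d})=c_{d_1}(\tfrac{n_1}{d_1})c_{d_2}(\tfrac{n_2}{d_2})$. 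Raising this second identity to the $u$-th power gives $(c_d(\tfrac{n}{d}))^u=(c_{d_1}(\tfrac{n_1}{d_1}))^u (c_{d_2}(\tfrac{n_2}{d_2}))^u$, which is the single new ingredient beyond the $u=1$ argument.

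Finally, substituting these factorizations into \eqref{eq:KeySum-u} and splitting the sum over $d=d_1d_2$ into independent sums over $d_1|n_1$ and $d_2|n_2$ yields
\[
Y_u[n,k]=\left(\sum_{d_1|n_1} c_{k_1}(\tfrac{n_1}{d_1})\,(c_{d_1}(\tfrac{n_1}{d_1}))^u\right)\cdot\left(\sum_{d_2|n_2} c_{k_2}(\tfrac{n_2}{d_2})\,(c_{d_2}(\tfrac{n_2}{d_2}))^u\right),
\]
which is exactly $Y_u[n_1,k_1]\cdot Y_u[n_2,k_2]$. There is no real obstacle; the only point requiring any attention is the coprimality hypothesis in Corollary~\ref{cor:Factorisation}, which is handled by the remark above. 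The $u$-th power interacts trivially with the Ramanujan-sum factorization, so the argument carries over from the $u=1$ case without modification.
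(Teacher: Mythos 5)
Your proof is correct and follows essentially the same route as the paper, which simply asserts that the argument of Lemma~\ref{lem:multiplicativity} carries over verbatim; your explicit check of the coprimality hypothesis in Corollary~\ref{cor:Factorisation} and the observation that the $u$-th power respects the factorization are exactly the (routine) points the paper leaves implicit.
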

%

\begin{lemma}\label{lem:n-has-SqFreepart-u} Let $n=x\prod_{i=1}^r q_i$ where $r\ge 1,$ $\{q_i\}_{i=1}^r$ are distinct primes, and $x$ is an integer relatively prime to $q_i$ for all $i.$  
Then every divisor $k$ of $n$ is of the form $k=zq_T$ for a divisor $z$ of $x$ and $T\subseteq [r],$ and we have 
\begin{equation}\label{eq:n-has-SqFreepart-u} Y_u[n,k]=Y_u[n, zq_T] =Y_u[x,z]\cdot
\begin{cases} q_{[r]} \, \delta_{T, [r]}, u \ \mathrm{odd},\\
                      \sum_{S\subseteq [r]}  c_{q_T}(\tfrac{q_{[r]}}{q_S}), u \ \mathrm{even}.
\end{cases}
\end{equation}
In particular, $Y_u[n, zq_T]$ is always a nonnegative integer multiple of $Y_u[x,z].$ 
\end{lemma}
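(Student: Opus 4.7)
The plan is to reduce everything to the single-squarefree-block case $n = q_{[r]}$ via the multiplicativity of Lemma~\ref{lem:multiplicativity-u}, and then evaluate the resulting $2^{r}$-term sum on the subset lattice by splitting on the parity of $u$.

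First I would record that the unique factorization $k = zq_T$ is forced by $(x, q_{[r]}) = 1$ together with unique factorization in $\mathbb{Z}$: any divisor of $n = x \cdot q_{[r]}$ splits uniquely as (divisor of $x$) times (divisor of $q_{[r]}$), and divisors of the squarefree $q_{[r]}$ are in bijection with subsets of $[r]$. Applying Lemma~\ref{lem:multiplicativity-u} with $n_1 = x$ and $n_2 = q_{[r]}$ then gives
\[
Y_u[n, zq_T] \;=\; Y_u[x, z] \cdot Y_u[q_{[r]}, q_T],
\]
so the whole claim reduces to identifying $Y_u[q_{[r]}, q_T]$ with the piecewise factor on the right side of~\eqref{eq:n-has-SqFreepart-u} and showing it is a nonnegative integer.

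For the core computation, I would expand $Y_u[q_{[r]}, q_T]$ using definition~\eqref{eq:KeySum-u}, running $d$ over divisors $q_S$ of $q_{[r]}$. Since $q_S$ and $q_{[r]}/q_S = q_{[r]\setminus S}$ are coprime, the special case~\eqref{eqn:SpecCasephimu} gives $c_{q_S}(q_{[r]}/q_S) = \mu(q_S) = (-1)^{|S|}$, so that
\[
Y_u[q_{[r]}, q_T] \;=\; \sum_{S \subseteq [r]} c_{q_T}(q_{[r]}/q_S)\, \mu(q_S)^{u}.
\]
When $u$ is odd, $\mu(q_S)^{u} = \mu(q_S)$, and reindexing $k = q_S$ turns the sum into $\sum_{k \mid q_{[r]}} c_{q_T}(q_{[r]}/k)\, \mu(k)$, which by Proposition~\ref{prop:KeyFact} equals $q_{[r]}\, \delta_{q_T,\, q_{[r]}} = q_{[r]}\, \delta_{T, [r]}$; this is precisely the argument used in the proof of Lemma~\ref{lem:n-has-SqFreepart}. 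When $u$ is even, $\mu(q_S)^{u} = 1$ on every summand (since $q_S$ is squarefree, $\mu(q_S) = \pm 1$, never $0$), and the sum collapses to $\sum_{S \subseteq [r]} c_{q_T}(q_{[r]}/q_S) = \sum_{k \mid q_{[r]}} c_{q_T}(k) = a_{q_T}(q_{[r]})$, the row sum of $M_{q_{[r]}}$ indexed by $q_T$, which is a nonnegative integer by Corollary~\ref{cor:Ramrowsum-nonneg}.

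The ``in particular'' clause then follows immediately, since in both parities the multiplier of $Y_u[x, z]$ has been shown to be a nonnegative integer. I do not expect any real obstacle here: the only step one might have worried about is the even-$u$ case, but because $\mu$ is nonvanishing on squarefree inputs the $u$-th power is trivial, reducing the problem to the already-established nonnegativity of the Ramanujan-matrix row sums.
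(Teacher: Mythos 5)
Your proposal is correct and follows essentially the same route as the paper: reduce via Lemma~\ref{lem:multiplicativity-u} to $Y_u[q_{[r]},q_T]$, replace $c_{q_S}(q_{[r]}/q_S)$ by $\mu(q_S)$ using the coprimality special case, then invoke Proposition~\ref{prop:KeyFact} for odd $u$ and the nonnegativity of the Ramanujan-matrix row sums for even $u$. The only cosmetic difference is that you cite Corollary~\ref{cor:Ramrowsum-nonneg} where the paper cites Theorem~\ref{thm:FowlerGarciaKaraali} directly, which amounts to the same thing.
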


\begin{proof}  From Lemma~\ref{lem:multiplicativity-u} we have 
\[Y_u[n, zq_T] =Y_u[x,z]\, Y_u[q_{[r]}, q_T].\]

But \[Y_u[q_{[r]}, q_T]
=\sum_{S\subseteq [r]}  c_{q_T}(\tfrac{q_{[r]}}{q_S}) (c_{q_S}(\tfrac{q_{[r]}}{q_S}))^u
=  \sum_{S\subseteq [r]}  c_{q_T}(\tfrac{q_{[r]}}{q_S})\,(\mu(q_S))^u,\]
where the last reduction is a consequence of the special case~\eqref{eqn:SpecCasephimu} 
since 
$q_S$ and $\tfrac{q_{[r]}}{q_S}$ are relatively prime.  

When $u$ is odd, $(\mu(q_S))^u=\mu(q_S),$ and so the last sum equals $q_{[r]} \, \delta_{T, [r]}$ by Proposition~\ref{prop:KeyFact}.

When $u$ is even, the last sum equals $\sum_{S\subseteq [r]}  c_{q_T}(\tfrac{q_{[r]}}{q_S}),$ and this 
 is a row sum of the matrix $M_{q_{[r]}};$ it is therefore nonnegative by Theorem~\ref{thm:FowlerGarciaKaraali}.  
\end{proof}

As in the previous section, Proposition~\ref{prop:Ramsum0pm1} gives us the following, which also follows from the above calculations of $Y_u[n,k]$.
\begin{prop}\label{prop:n-is-SqFree-4m-u}  Let $u$ be a nonnegative integer.  

First let $u$ be odd.  If $n$ is square-free or 4 times an odd square-free integer, then 
  the symmetric function $R_{n,u}=\sum_{d|n} (c_d (\tfrac{n}{d}))^u p_d^{\tfrac{n}{d}}$ coincides with 
\[R_n=\sum_{d|n} c_d(\tfrac{n}{d}) p_d^{\tfrac{n}{d}}\]
and is thus Schur positive. 

Now suppose $u$ is even.   Then 
\[R_{n,u}=\begin{cases} R_{n,0}=\sum_{d|n} p_d^{\tfrac{n}{d}}, 
                                                                                & n\ \text{square-free},\\
                         R_{n,2} =\sum_{d|n}  (c_d(\tfrac{n}{d}))^2 p_d^{\tfrac{n}{d}}, 
& n=4m,  \text{where $m$ is odd and square-free}.
\end{cases}
\]
In particular, for any $u\ge 0$, $R_{n,u}$ is 
  a nonnegative linear combination of $\{\ell_n^{(d)}\}_{d|n}$ when $n$ is square-free. 
\end{prop}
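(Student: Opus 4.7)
The plan is to derive this entirely as a consequence of Proposition~\ref{prop:Ramsum0pm1}, combined with the elementary observation that, for any integer $x \in \{0, \pm 1\}$, one has $x^u = x$ when $u$ is odd and $x^u = x^2$ when $u \geq 2$ is even (with $x^0 = 1$ requiring separate handling, but in that case $R_{n,0}$ is precisely $\Phi_n$ of Section~\ref{sec:Rectangular}, so there is nothing to prove beyond Theorem~\ref{thm:Chapoton-Shareshian}). The key reduction is term-by-term inside the defining sum for $R_{n,u}$.

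First I would invoke Proposition~\ref{prop:Ramsum0pm1} to note that the hypothesis ``$n$ is square-free or $n = 4m$ with $m$ odd square-free'' is equivalent to $c_d(\tfrac{n}{d}) \in \{0, \pm 1\}$ for every divisor $d$ of $n$. Under this hypothesis, and for each $d \mid n$, the scalar coefficient $(c_d(\tfrac{n}{d}))^u$ in the sum
\[
R_{n,u} = \sum_{d \mid n} (c_d(\tfrac{n}{d}))^u \, p_d^{n/d}
\]
simplifies: for odd $u \ge 1$ it equals $c_d(\tfrac{n}{d})$, while for even $u \ge 2$ it equals $(c_d(\tfrac{n}{d}))^2$. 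This establishes the identity $R_{n,u} = R_{n,1} = R_n$ for odd $u$ and the identity $R_{n,u} = R_{n,2}$ for even $u \ge 2$, in the stated ranges of $n$.

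Next, I would handle Schur positivity and the structural statement in each case. For odd $u$, Theorem~\ref{thm:DiagRamSum} directly gives Schur positivity of $R_n$. For the final ``in particular'' claim where $n$ is square-free, Proposition~\ref{prop:Ramsum0pm1} moreover gives $c_d(\tfrac{n}{d}) = \mu(d) = \pm 1$ for every $d \mid n$, so $(c_d(\tfrac{n}{d}))^2 = 1$ for all $d \mid n$. Consequently, for even $u \ge 2$ we have
\[
R_{n,u} = \sum_{d \mid n} p_d^{n/d} = \Phi_n,
\]
which is Schur positive (and a nonnegative combination of the $\ell_n^{(d)}$) by Theorem~\ref{thm:Chapoton-Shareshian}. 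For $u = 0$ this identity holds trivially by definition. Combining with the odd-$u$ case already handled by Theorem~\ref{thm:DiagRamSum}, we obtain the concluding statement: for every $u \ge 0$ and every square-free $n$, $R_{n,u}$ is a nonnegative integer combination of $\{\ell_n^{(d)}\}_{d \mid n}$.

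There is no genuine obstacle here: the entire proposition is essentially a restatement of the pointwise identities $x^u \in \{x, x^2\}$ on $\{0, \pm 1\}$, applied coefficient-by-coefficient. The only minor care needed is to treat the $n = 4m$ case for \emph{even} $u$ correctly: here $c_d(\tfrac{n}{d})^2$ is $1$ on square-free divisors and $0$ on divisors with $4 \mid d$, matching the definition of $R_{n,2}$ exactly, which is why the proposition does not assert equality with $R_{n,0}$ in that case.
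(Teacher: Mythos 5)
Your proposal is correct and follows essentially the same route as the paper, which derives the proposition directly from Proposition~\ref{prop:Ramsum0pm1} (the characterization of when $c_d(\tfrac{n}{d})\in\{0,\pm1\}$) together with the pointwise identities $x^u=x$ for odd $u$ and $x^u=x^2$ for even $u\ge 2$ on $\{0,\pm1\}$; the paper merely notes in addition that the same conclusion can be read off from the $Y_u[n,k]$ computations. Your care with the $u=0$ versus $u\ge 2$ distinction in the $n=4m$ case is exactly the right point to flag.
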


\begin{cor}\label{cor:powers-of-4-u} Let $n=4m,$ where $m$ is square-free and odd. Then 
$R_{n,u}$ is a nonegative linear combination of $\{\ell_n^{(d)}\}_{d|n}$ and hence it is Schur positive.
\end{cor}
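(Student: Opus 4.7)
The plan is to split by parity of $u$ and use Proposition~\ref{prop:n-is-SqFree-4m-u} to reduce each case either to something already handled or to a single finite verification. For $u$ odd, Proposition~\ref{prop:n-is-SqFree-4m-u} identifies $R_{n,u}$ with the function $R_n$, and the proof of Theorem~\ref{thm:DiagRamSum} already exhibits $R_n$ as a nonnegative integer combination of the Foulkes characteristics $\{\ell_n^{(d)}\}_{d\mid n}$, so nothing more is needed. For $u\ge 2$ even, the same proposition reduces the claim to showing that $R_{n,2}$ is a nonnegative combination of the $\ell_n^{(d)}$, equivalently that $Y_2[n,k]\ge 0$ for every divisor $k$ of $n$.

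To establish this, I would exploit the multiplicativity in Lemma~\ref{lem:multiplicativity-u}. Since $\gcd(4,m)=1$, every divisor of $n=4m$ factors uniquely as $k=k_1k_2$ with $k_1\mid 4$ and $k_2\mid m$, and
\[Y_2[n,k]=Y_2[4,k_1]\cdot Y_2[m,k_2].\]
The square-free factor is immediate: because $m$ is square-free, Lemma~\ref{lem:n-has-SqFreepart-u} (applied with $x=z=1$ and $u$ even) identifies $Y_2[m,k_2]$ with a row sum of the Ramanujan matrix $M_m$, which is nonnegative by Corollary~\ref{cor:Ramrowsum-nonneg}.

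What remains is the elementary direct computation of $Y_2[4,k_1]$ for $k_1\in\{1,2,4\}$. Using Corollary~\ref{cor:prime-powers} to tabulate the relevant Ramanujan sums ($c_1\equiv 1$, $c_2(2)=c_2(4)=1$, $c_4(1)=0$, $c_4(2)=-2$, $c_4(4)=2$), one finds
\[Y_2[4,1]=Y_2[4,2]=2,\qquad Y_2[4,4]=0,\]
all nonnegative. Multiplying the two factors then gives $Y_2[n,k]\ge 0$, as required.

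There is no serious obstacle here: the heavy lifting was done in Propositions~\ref{prop:Ramsum0pm1} and~\ref{prop:n-is-SqFree-4m-u} and in the multiplicativity lemmas of this section. The one computation specific to this corollary is the three-divisor case $n=4$, and the key phenomenon is the cancellation $c_4(4)(c_1(4))^2+c_4(2)(c_2(2))^2=2-2=0$ that forces $Y_2[4,4]=0$; without it the claimed nonnegativity could fail. (If the statement is also intended for $u=0$, then $R_{n,0}=\Phi_n$ is handled directly by Theorem~\ref{thm:Chapoton-Shareshian}.)
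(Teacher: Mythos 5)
Your proof is correct and follows essentially the same route as the paper: both reduce via the multiplicativity/square-free lemmas (Lemma~\ref{lem:multiplicativity-u}, Lemma~\ref{lem:n-has-SqFreepart-u}, Proposition~\ref{prop:n-is-SqFree-4m-u}) to the single finite computation of $Y_u[4,k_1]$ for $k_1\mid 4$. One remark worth making: your values $Y_u[4,1]=Y_u[4,2]=2$ and $Y_u[4,4]=c_4(4)+c_4(2)=2-2=0$ are the correct ones, whereas the paper's proof states $Y_u[4,k]=1$ for $k=1,2$ and $Y_u[4,4]=2$, which appears to be an arithmetic slip (e.g.\ $c_1(4)+c_1(2)=2$); since both sets of values are nonnegative, the conclusion is unaffected either way.
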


\begin{proof} We claim that $Y_u[n, k] \ge 0$ for every divisor $k$ of $n,$ 
for all $u\ge 1.$ This follows from 
 $Y_u[4,k]=c_k(4) + c_k(2) (c_2(2))^u+ c_k(1) (c_4(1))^u
=c_k(4) +c_k(2)$ for all $u\ge 0.$  Hence $Y_u[4,k]=1$ if $k=1,2$ and $Y_u[4,4]=2$, 
so $Y_u[4,k]$ is positive for all $u$ and $k|4$. 
Now apply the reduction of Lemma~\ref{lem:n-has-SqFreepart-u} and  Proposition~\ref{prop:n-is-SqFree-4m-u}.
\end{proof}

We can now summarise our results as follows.
\begin{theorem}\label{thm:DiagRamSumPowers} Let $u$ be a nonnegative integer and $n$ a positive integer.  The symmetric function $R_{n,u}$ is a positive integer combination of 
$\{\ell_n^{(m)}\}_{m|n}$ in the following cases:
\begin{enumerate}
\item $u=0,1$  and $n\ge 1;$
\item  $u\ge 0$ and $n$ is square-free; 
\item $u\ge 0$ and  $n=4 m$ where $m$ is odd and square-free.
\end{enumerate} 
\end{theorem}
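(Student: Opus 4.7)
The plan is to assemble this theorem entirely from results already developed in Sections~\ref{sec:Rectangular} and \ref{sec:weightedRectangular}. Throughout I will use the expansion
\[
R_{n,u} = \sum_{k|n} Y_u[n, \tfrac{n}{k}]\, \ell_n^{(k)},
\]
so in each case it suffices to verify that every coefficient $Y_u[n,k]$ is a nonnegative integer; positivity of the linear combination (as opposed to mere nonnegativity) is then read off from the cases where we actually produce nonzero values.

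For case (1) with $u=0$, the coefficient is $Y_0[n,k] = a_k(n)$, the row sum of $M_n$, which is nonnegative by Corollary~\ref{cor:Ramrowsum-nonneg}; this is Theorem~\ref{thm:Chapoton-Shareshian}. For $u=1$, the nonnegativity of $Y_1[n,k] = Y[n,k]$ is exactly Theorem~\ref{thm:DiagRamSum}, obtained by combining the multiplicativity in Lemma~\ref{lem:multiplicativity} with the explicit prime-power evaluations of Lemmas~\ref{lem:n-odd-prime-power} and~\ref{lem:n-even-prime-power}.

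For case (2), Proposition~\ref{prop:Ramsum0pm1} shows that when $n$ is square-free, $c_d(n/d) = \mu(d) \in \{\pm 1\}$ for every $d|n$. Therefore $(c_d(n/d))^u$ is constantly $1$ when $u$ is even and equals $c_d(n/d)$ when $u$ is odd, so $R_{n,u}$ collapses to $R_{n,0}$ or $R_{n,1}$ and case (1) applies; this is recorded in Proposition~\ref{prop:n-is-SqFree-4m-u}. Case (3) follows by the same mechanism: when $n=4m$ with $m$ odd and square-free, Proposition~\ref{prop:Ramsum0pm1} forces $|c_d(n/d)| \in \{0,1\}$, so $(c_d(n/d))^u$ depends only on the parity of $u$ once $u\ge 1$, and Corollary~\ref{cor:powers-of-4-u} completes the reduction by using Lemma~\ref{lem:n-has-SqFreepart-u} to factor $Y_u[4m,k]$ and then verifying by direct enumeration that $Y_u[4,k] \in \{1,2\}$ for every $k\mid 4$ and every $u \geq 0$.

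Since every ingredient is already in place, no new obstacle arises in this consolidation. The genuine technical work sits in the prime-power Lemmas~\ref{lem:n-odd-prime-power} and~\ref{lem:n-even-prime-power} used for the $u=1$ piece of case (1); the remaining difficulty, namely Schur positivity of $R_{n,u}$ for $u\ge 2$ at a general $n$, is precisely what falls outside the three cases above and is left as the conjecture announced in the abstract.
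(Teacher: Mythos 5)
Your proposal is correct and follows the paper's own route exactly: the theorem is presented there as a summary assembled from precisely the ingredients you cite, namely Theorem~\ref{thm:Chapoton-Shareshian} and Theorem~\ref{thm:DiagRamSum} for case (1), Proposition~\ref{prop:n-is-SqFree-4m-u} (via Proposition~\ref{prop:Ramsum0pm1}) for case (2), and Corollary~\ref{cor:powers-of-4-u} with the computation $Y_u[4,k]=c_k(4)+c_k(2)\in\{1,2\}$ for case (3). Your remark that ``positive'' should be read as nonnegativity of the coefficients $Y_u[n,\tfrac{n}{k}]$ is also consistent with how the paper's supporting results are actually stated.
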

From  the preceding theorem and Proposition~\ref{prop:Ramsum0pm1}, 
we see that for $u\ge 2$, it remains to consider the values of $n$ for which  
 there is a divisor $d$ such that $|c_d(\tfrac{n}{d})|>1$.  Let $\chi_{n,u}$ denote the character of the (possibly virtual) representation whose Frobenius characteristic is $R_{n,u}$.  Then 
the character value on the conjugacy class indexed by  the partition $d^{\tfrac{n}{d}}$ equals 
\[\chi_{n,u}(d^{\tfrac{n}{d}}) = (c_d(\tfrac{n}{d}))^u d^{\tfrac{n}{d}} (\tfrac{n}{d})!\]
Since  $|c_d(\tfrac{n}{d})|>1$, there is always a $u$ sufficiently large such that $|\chi_{n,u}(d^{\tfrac{n}{d}})| >n!=\chi_{n,u}(1^n)$.   It follows that for this value of $u$, $\chi_{n,u}$ cannot be the character of a true $\mathfrak{S}_n$-module \cite[Lemma 2.15]{Isaacs1994}.  Hence for $u$ sufficiently large, there is always a value of $n$ such that $R_{n,u}$ is not Schur positive. 

 For example, quick calculations using the argument just given show that if $u\ge 15$ then $R_{8,u}$ is not Schur positive. It turns out (see Proposition~\ref{prop:prime-powers-u-NOT-pos} below) that $R_{8,u}$ is not Schur positive for $u\ge 3$.

Unfortunately,  the positivity results in the analogues of Lemmas~\ref{lem:n-odd-prime-power} and \ref{lem:n-even-prime-power} no longer hold for $u\ge 2$. 
\begin{ex}\label{nonnegLinCombElls} Data from  Maple and Stembridge's SF package gives the following:

$R_{8,2}$ and $R_{9,2}$ ARE Schur positive, but their expansions in the $\ell_n^{(m)}$ are not:
\[R_{8,2}=6 \ell_8^{(8)} + 6 \ell_8^{(4)} -4 \ell_8^{(2)},\]
\[R_{9,2}=5 \ell_9^{(9)} + 10 \ell_9^{(3)} -6 \ell_9^{(1)}.\]
\end{ex}

Although the functions $R_{n,u}$ are no longer a positive combination of $\{\ell_n^{(m)}\}_{m|n}$ 
for $u\ge 2,$  the data still supports the following conjecture for the case $u=2$:  
\begin{conj}\label{conj:Rn2}  The symmetric function $R_{n,u}=\sum_{d|n} (c_d(\tfrac{n}{d}))^u\, p_d^{\tfrac{n}{d}}$ is Schur positive for all $n$ when $u=2.$  
\end{conj}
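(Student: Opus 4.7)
The plan is to attack Conjecture~\ref{conj:Rn2} by combining a multiplicative reduction to prime powers with a character-theoretic positivity argument. In contrast to the cases $u=0,1$ treated in Sections~\ref{sec:Rectangular} and~\ref{sec:weightedRectangular}, Example~\ref{nonnegLinCombElls} shows that the Foulkes-basis coefficients $Y_2[n,k]$ can be negative, so a genuinely different decomposition is needed. The essential new structural feature of $u=2$ is that every coefficient $c_d(\tfrac{n}{d})^2$ is a nonnegative integer.

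The first step is a multiplicative reduction. Lemma~\ref{lem:multiplicativity-u} already establishes $Y_2[n_1 n_2, k_1 k_2] = Y_2[n_1, k_1]\cdot Y_2[n_2, k_2]$ for $\gcd(n_1,n_2)=1$, and Proposition~\ref{prop:n-is-SqFree-4m-u} together with Corollary~\ref{cor:powers-of-4-u} already handle the squarefree and $n=4m$ cases.  The goal is to promote multiplicativity of the $Y_2$-coefficients into a plethystic identity for the symmetric function $R_{n,2}$ itself, exploiting $p_{d_1 d_2} = p_{d_1}[p_{d_2}]$.  If this succeeds, the conjecture reduces to the prime-power case $n = q^a$.

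For $n = q^a$, Corollary~\ref{cor:prime-powers} gives the explicit form
\[R_{q^{2r}, 2} = \sum_{i=0}^r \phi(q^i)^2\, p_{q^i}^{q^{2r-i}}, \qquad R_{q^{2r+1}, 2} = \sum_{i=0}^r \phi(q^i)^2\, p_{q^i}^{q^{2r+1-i}} + q^{2r}\, p_{q^{r+1}}^{q^r}.\]
Schur positivity then amounts, for each $\lambda \vdash q^a$, to the character inequality
\[\sum_{i=0}^{\lfloor a/2 \rfloor} \phi(q^i)^2\, \chi^\lambda\bigl((q^i)^{q^{a-i}}\bigr) + [a\text{ odd}]\cdot q^{a-1}\, \chi^\lambda\bigl((q^{(a+1)/2})^{q^{(a-1)/2}}\bigr) \geq 0.\]
Each value $\chi^\lambda\bigl((q^i)^{q^{a-i}}\bigr)$ admits an explicit evaluation via the $q^i$-core/$q^i$-quotient theorem of James and Kerber, which rewrites it as a product of dimensions of smaller irreducibles weighted by a signed multinomial.

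The hard part will be controlling the alternating contributions on the left-hand side: $\phi(q^i)^2$ grows like $q^{2i-2}$, while $|\chi^\lambda|$ on nontrivial rectangular classes can be comparable in magnitude and negative in sign.  The expected mechanism is dominance of the $i=0$ term $\phi(1)^2 \chi^\lambda(1^n) = f^\lambda$, but formalizing this dominance is likely to require a delicate induction on the $q$-weight of $\lambda$.  An alternative avenue, which I would pursue in parallel, is to realize $R_{n,2}$ as the character of an explicit induced representation $\mathrm{Ind}_H^{\mathfrak{S}_n}\psi$---for instance with $H = C_d \wr \mathfrak{S}_{n/d}$ and $\psi$ built from tensor squares of one-dimensional characters of $C_d$, mirroring the identity $c_d(r)^2 = \bigl(\sum_{(m,d)=1}\zeta_d^{mr}\bigr)^2$; any such construction would yield Schur positivity automatically.
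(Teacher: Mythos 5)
Note first that the statement you are addressing is Conjecture~\ref{conj:Rn2}: the paper offers no proof of it, only computational verification for $n\le 45$ together with the partial results of Proposition~\ref{prop:n-is-SqFree-4m-u} and Corollary~\ref{cor:powers-of-4-u}. What you have written is a research plan, and each of its three pillars has a genuine gap. The most concrete failure is the reduction to prime powers. The plethystic identity you hope for is false: taking $n_1=2$, $n_2=3$, one computes $R_{2,2}[R_{3,2}]=(p_1^3+p_3)^2+(p_2^3+p_6)=R_{6,2}+2\,p_1^3p_3$, so cross terms of the form $p_{d_1}^{a}p_{d_1d_2}^{b}$ appear and $R_{n_1n_2,2}\ne R_{n_1,2}[R_{n_2,2}]$. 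The multiplicativity that \emph{is} available, Lemma~\ref{lem:multiplicativity-u}, lives in the Foulkes basis, and precisely because the coefficients $Y_2[n,k]$ can be negative (your own citation of Example~\ref{nonnegLinCombElls}: $Y_2[8,4]=-4$, $Y_2[9,9]=-6$), knowing that $Y_2[n,k]=Y_2[n_1,k_1]Y_2[n_2,k_2]$ and that $R_{n_1,2}$, $R_{n_2,2}$ are Schur positive gives no control on the sign of $\langle R_{n,2},s_\lambda\rangle$. So even a complete solution of the prime-power case would not, by your argument, yield the general case.

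The prime-power case itself is also not closed. Your explicit formulas for $R_{q^{2r},2}$ and $R_{q^{2r+1},2}$ are correct, and the reformulation as the inequality $\sum_i \phi(q^i)^2\chi^\lambda((q^i)^{q^{a-i}})+[a\ \mathrm{odd}]\,q^{a-1}\chi^\lambda((q^{(a+1)/2})^{q^{(a-1)/2}})\ge 0$ is exactly the content of the conjecture for $n=q^a$ --- but you then say only that you \emph{expect} the $i=0$ term to dominate and that formalizing this is ``likely to require a delicate induction.'' That dominance cannot hold termwise: for $\lambda=(n)$ one has $f^\lambda=1$ while the higher terms are each $\phi(q^i)^2$ (positivity there comes from all character values being $+1$, not from domination by $f^\lambda$), and the paper's own Proposition~\ref{prop:prime-powers-u-NOT-pos} shows that for $u\ge 3$ the analogous sums do go negative on prime powers, so whatever mechanism saves $u=2$ must be sharper than a crude bound on $|\chi^\lambda|$. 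Finally, the induced-representation construction $\mathrm{Ind}_H^{\mathfrak{S}_n}\psi$ is only named, not built; the observation that $c_d(r)^2$ expands as a nonnegative integer combination of $c_{e}(r)$ for $e\mid d$ is suggestive but does not by itself produce the Foulkes-type identity you would need, since $\sum_{d\mid n}c_{d'}(\tfrac{n}{d})p_d^{n/d}$ is not one of the characters $\ell_n^{(r)}$. In short: no step of the argument is completed, and the conjecture remains open after your proposal.
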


Finally we consider the multiplicities of the trivial representation and the irreducible indexed by $(n-1,1)$ in the representation $R_{n,u}$. 
The multiplicity of the trivial representation is 
\begin{equation}\label{eqn:Rnu-triv} t(n,u)=\sum_{d|n} c_d(\tfrac{n}{d})^u,
\end{equation}
and the multiplicity of the irreducible indexed by $(n-1,1)$ is 
\begin{equation}\label{eqn:Rnu-refl} n-t(n,u)= n-\sum_{d|n} c_d(\tfrac{n}{d})^u,
\end{equation}
since rectangular partitions $(d^{\tfrac{n}{d}})$ have fixed points only for $d=1$.  The analogous statement holds for the multiplicity of the irreducible indexed by $(2, 1^{n-2})$, since the multiplicity of the sign representation is $\sum_{d|n} c_d(\tfrac{n}{d})^u (-1)^{n-\tfrac{n}{d}}$. 

Explicit computations of these multiplicities show that $\Phi_{n,u}$ is not Schur positive for certain $n$ and small $u$, thereby explaining some entries in Table~\ref{tab:u-Data} below.
\begin{prop}\label{prop:prime-powers-u-NOT-pos} Fix a prime $q$.  Let $u\ge 3$.
\begin{enumerate}
\item 
If $n=q^{2k+1}, k\ge 1$, then $R_{n,u}$ is never Schur positive.
\item
If $n=q^{2k}, k\ge 1$ then $R_{n,u}$ is never Schur positive when $u\ge 4$. If $u=3$, it is never Schur positive except when   $n=9, 16$. 
\item If $n$ is twice an odd prime power, then $R_{n,u}$ is never Schur positive for $u\ge 4$. It is never positive for $u=3$ unless $n=18$.   
\end{enumerate}
\end{prop}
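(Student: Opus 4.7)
The plan is, for each case and each relevant $u$, to exhibit a strictly negative multiplicity of some specific irreducible in $R_{n,u}$, which immediately precludes Schur positivity. The natural candidates are the trivial, sign, $(n-1,1)$, and $(2,1^{n-2})$ representations, whose multiplicities can be read off from equations~\eqref{eqn:Rnu-triv} and~\eqref{eqn:Rnu-refl} (together with their signed analogs) and depend only on the diagonal Ramanujan sums $c_d(n/d)$, which Corollary~\ref{cor:prime-powers} gives explicitly in each prime-power regime. The main obstacle will be boundary bookkeeping: identifying exactly which small $(q,k,u)$ force equality rather than strict inequality in the key estimates, and verifying that these correspond precisely to the stated exceptions $n\in\{9,16,18\}$ (together with $n=4$, which is separately handled by Corollary~\ref{cor:powers-of-4-u} and is therefore outside the proposition's intended scope).

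For Case (1), $n=q^{2k+1}$, Corollary~\ref{cor:prime-powers} restricts the sum over $d=q^j$ to $0\le j\le k+1$, giving
\[
t(n,u)=1+(q-1)^u\sum_{j=0}^{k-1}q^{ju}+(-1)^u q^{ku}.
\]
For $u$ odd and $\ge 3$, I would telescope via $(q-1)^u q^{ju}\le (q^u-1)q^{ju}=q^{(j+1)u}-q^{ju}$ to derive $t(n,u)\le 1+(q-1)^u-q^u$, and then invoke the margin $q^u-(q-1)^u\ge 1+u(q-1)\ge 4$ to conclude $t(n,u)<0$. For $u$ even and $\ge 4$, the lone term $q^{ku}$ already exceeds $n$, since $ku\ge 4k>2k+1$ for $k\ge 1$; hence $t>n$, and the $(n-1,1)$-multiplicity $n-t$ is negative.

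For Case (2), $n=q^{2k}$, the analogous formula is $t(n,u)=1+(q-1)^u\cdot (q^{ku}-1)/(q^u-1)$, and the inequality $t>n$ reduces to
\[
(q-1)^u(q^{ku}-1)>(q^u-1)(q^{2k}-1).
\]
A direct check verifies that this fails precisely at $(q,k)\in\{(2,1),(3,1),(2,2)\}$ for $u=3$, and only at $(2,1)$ for $u\ge 4$. The first failure corresponds to $n=4$ (outside scope), while $(3,1)$ and $(2,2)$ give the announced exceptions $n=9,16$; every remaining case yields a negative $(n-1,1)$-multiplicity.

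For Case (3), $n=2q^k$ with $q$ odd, Corollary~\ref{cor:Factorisation} factors $c_{2^aq^b}(2^{1-a}q^{k-b})=(-1)^a\cdot c_{q^b}(q^{k-b})$. Setting $S_u:=\sum_{b=0}^{k}c_{q^b}(q^{k-b})^u$, summation gives trivial-multiplicity $(1+(-1)^u)S_u$ and sign-multiplicity $(1-(-1)^u)S_u$, the sign arising from $(-1)^{(d-1)(n/d)}=(-1)^a$ on each divisor $d=2^a q^b$. Crucially, $S_u$ coincides exactly with the trivial multiplicity $t(q^k,u)$ from Cases (1)--(2) applied to $q^k$, so the earlier bounds transfer: for $u\ge 4$ one obtains $S_u>q^k$ and hence a negative $(n-1,1)$-multiplicity $n-2S_u$; for $u=3$ with $k$ odd one obtains $S_3<0$ and hence a negative sign multiplicity; and for $u=3$ with $k$ even, the inequality $S_3>q^k$ reduces at $k=2$ to $(q-1)^2>q+1$, which fails exactly at $q=3$, producing the single exception $n=18$ while giving a negative $(2,1^{n-2})$-multiplicity in every remaining case.
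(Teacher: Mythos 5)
Your strategy is the same as the paper's: take the nonnegativity of the multiplicities of the trivial, sign, $(n-1,1)$ and $(2,1^{n-2})$ irreducibles as a necessary condition, compute $t(n,u)=\sum_{d\mid n}c_d(\tfrac{n}{d})^u$ explicitly from Corollary~\ref{cor:prime-powers}, and show that $t<0$ or $t>n$ (or the signed analogue fails) outside the stated exceptions. Your closed forms for $t(n,u)$ in Cases (1) and (2) and the factorization $c_{2^aq^b}(2^{1-a}q^{k-b})=(-1)^a c_{q^b}(q^{k-b})$ in Case (3) agree with the paper's; your telescoping bound for odd $u$ in Case (1) is a valid, slightly different route to $t(n,u)<0$ (the paper instead uses $(q-1)^u<q^u-1$ to get $t<1+(q^{ku}-1)-q^{ku}=0$). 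Your explicit flagging of $n=4$ is in fact more careful than the paper, whose claim that $t(n,u)>q^{2k}$ ``except for the cases stated above'' silently fails at $q=2$, $k=1$.

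Two points need repair. First, in Case (2) you assert that ``a direct check verifies'' that $(q-1)^u(q^{ku}-1)>(q^u-1)(q^{2k}-1)$ fails only at $(q,k)\in\{(2,1),(3,1),(2,2)\}$ for $u=3$ and only at $(2,1)$ for $u\ge4$. Since $(q,k)$ ranges over an infinite set this is not a finite check, and the missing uniform estimate is precisely where the paper's proof does its work: it treats $k=1$ and $k=2$ by explicit inequalities in $q$ and handles $k\ge3$ via $\tfrac{q^{ku}-1}{q^u-1}\ge\tfrac{q^{3k}-1}{q^3-1}$. You must supply such an argument (for instance, bound $t(n,u)-1\ge(q-1)^uq^{(k-1)u}$ and dispose of small $k$ by hand). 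Second, in Case (3) your blanket claim ``for $u\ge4$ one obtains $S_u>q^k$ and hence a negative $(n-1,1)$-multiplicity $n-2S_u$'' is wrong when $u\ge5$ is odd and $k$ is odd: there $S_u=t(q^k,u)<0$, the trivial multiplicity $(1+(-1)^u)S_u$ vanishes, and the obstruction is the negative sign multiplicity $(1-(-1)^u)S_u=2S_u<0$. The parity of $u$ decides which of the two multiplicities equals $2S_u$, so the subcases must be separated for all $u$, not only for $u=3$. (Both you and the paper leave the square-free value $n=2q$, where $R_{n,u}$ is genuinely Schur positive, outside Case (3) without comment; the exponent of the odd prime power is evidently meant to be at least $2$.)
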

\begin{proof} First note that a necessary condition for  Schur positivity of $R_{n,u}$ is that 
$0\le t(n,u)\le n$. 
Also note that for any $x\ge 2$, and any integer $u\ge 2$, we always have the elementary inequality 
\begin{equation}\label{eqn:basic-ineq} x^u-1>(x-1)^u. \end{equation}

 Let $n=q^{2k+1}$, an odd power of $q$.  Computing $t(n,u)$ using~\eqref{eqn:Rnu-triv} and Proposition~\ref{cor:prime-powers}, we obtain
\[ t(n,u)=1+(q^{ku}-1) \frac{(q-1)^u}{q^u-1} +(-1)^u q^{ku}.\]
When $u$ is odd,~\eqref{eqn:basic-ineq} gives
\[ t(n,u)=1+(q^{ku}-1) \frac{(q-1)^u}{q^u-1} - q^{ku}<0.\]
When $u$ is even, since $u\ge 3$ we have 
\[t(n,u)=1+(q^{ku}-1) \frac{(q-1)^u}{q^u-1} + q^{ku}>q^{ku}\ge q^{4k}>q^{2k+1}=n.\]
In either case, the necessary condition $0\le t(n,u)\le n$ fails, so $R_{n,u}=R_{q^{2k+1},u}$ is not Schur positive.

  Now let $n=q^{2k}, k\ge 2$ be an even power of $q$. One checks by direct computation 
(see Table~\ref{tab:u-Data}) that  Schur positivity holds for 
$R_{9,3}=p_1^9+8 p_3^3  $  and  for $R_{16,3}= p_1^{16}+p_2^4+ 8 p_4^4$.

In this case, using~\eqref{eqn:Rnu-triv} and Proposition~\ref{cor:prime-powers} shows that $t(n,u)>0$, since 
\[ t(n,u)=1+(q^{ku}-1) \frac{(q-1)^u}{q^u-1}.\] 
We claim that except for the cases stated above, $t(n,u)>q^{2k}$ for all $u\ge 3$. 

For $k=1$, $t(n,u)=1+(q-1)^u\ge 1+(q-1)^3,$  and $1+(q-1)^3>q^2$ if and only if $(q-1)^2>q+1$, which holds for all $q>3$.

For $k=2$, $t(n,u)=1+(q^{u}+1)(q-1)^{u}>q^4$ if and only if $(q^{u}+1)(q-1)^{u-1}>q^3+q^2+q+1$. But 
  \[(q^{u}+1)(q-1)^{u-1}\ge (q^{3}+1)(q-1)^2=(q^3+q^2+q+1)
+q^4(q-3)+q(q^3-3),\] and hence $t(n,u)> q^4$ for $q\ge 3$. 

Now let $k\ge 3$.  Expanding the geometric series in powers of $q^u$ shows that $\frac{q^{ku}-1}{q^u-1}\ge\frac{q^{3k}-1}{q^3-1}$ when $u\ge 3$. Hence 
\begin{align*} 
(t(n,u)-q^{2k})(q^3-1)
&\ge (q^{3k}-1) (q-1)^3 -(q^{2k}-1)(q^3-1)\\
&>q^{3k}-1 -q^{2k+3}+q^3+q^{2k}-1\\
&= q^{2k}(q^k-q^3+1) +(q^3-2)>0
\end{align*}
for $k\ge 3$, once again violating the necessary condition for Schur positivity. 

Finally let $n=2\, q^r$, where $q$ is an odd prime. 
Then $t(\tfrac{n}{2},u)$ is the multiplicity of the trivial representation in $R_{q^r,u}$. 
Corollary~\ref{cor:Factorisation} implies that 
\begin{equation*} \sum_{d|n} c_d(\tfrac{n}{d})^u
 = (1+(-1)^u) 
\left(\sum_{d|\tfrac{n}{2}} c_d(\tfrac{n/2}{d})^u +\sum_{d|\tfrac{n}{2}} c_d(\tfrac{n/2}{d})^u\right),
\end{equation*} and
\begin{equation*} \sum_{d|n} c_d(\tfrac{n}{d})^u (-1)^{n-\tfrac{n}{d}}
 = (1+(-1)^{u-1}) 
\left(\sum_{d|\tfrac{n}{2}} c_d(\tfrac{n/2}{d})^u +\sum_{d|\tfrac{n}{2}} c_d(\tfrac{n/2}{d})^u\right).
\end{equation*}
It follows  that 
$2\, t(\tfrac{n}{2},u)$ equals 
\begin{itemize}
\item
the multiplicity  of the trivial representation in $R_{n,u}$ when $u$ is even, and 
\item the multiplicity of the sign representation in $R_{n,u}$ when $u$ is odd.
\end{itemize}

The previous analysis of $t(n,u)$ now completes the proof.                     
\end{proof}

Conjecture~\ref{conj:Rn2} is confirmed by the data in Table~\ref{tab:u-Data} below, which was generated  using Stembridge's SF package for Maple, for $u=2$ and $n\le 45$. Note that, in view of Theorem~\ref{thm:DiagRamSumPowers}, the table excludes the cases when $n $ is square-free or 4 times an odd square-free number.  In fact all the failures in the table can be accounted for by checking the bounds on the multiplicity of the trivial or sign representations in $R_{n,u}$, as in Proposition~\ref{prop:prime-powers-u-NOT-pos}. 
Finally we note that unlike the case $u=1$, the symmetric functions $R_{n,2}$ do not appear to be integer multiples of Schur positive functions.  

\begin{table}[h]
\begin{center}
\begin{tabular}{|c|c|c|c|c|c|c|c|c|c|c|l|}
\hline
$u\backslash n$ & 8  &9 & 16 & 18 & 24 & 25 & 27 & 32 & 36 & 40 & 45 \\
[2pt]\hline
%
$ u=0, 1$ & {\bf Y} & {\bf Y} & {\bf Y} & {\bf Y} & {\bf Y} & {\bf Y} & {\bf Y} & {\bf Y} & {\bf Y}  &{\bf Y} & {\bf Y}\\
$2$ & {\bf Y} & {\bf Y} & {\bf Y} & {\bf Y} & {\bf Y} & {\bf Y} & {\bf Y} & {\bf Y} & {\bf Y}  &{\bf Y} &{\bf Y} \\
$3$ & N & {\bf Y} & {\bf Y} & {\bf Y} & {\bf Y} & N & N & N & {\bf Y}  &{\bf Y} 
& {\bf Y}\\
$4$ & N & N & N & N & N & N & N & N & {\bf Y}  &{\bf Y} &{\bf Y} \\
$5$ & N & N & N & N & {\bf Y} & N & N & N & N &{\bf Y} & {\bf Y}\\
$6$ & N & N & N & N & N & N & N & N & N &N &N \\
$7$ & N & N & N & N & {\bf Y} & N & N & N & N & {\bf Y} & {\bf Y}\\
$8$ & N & N & N & N & N & N & N & N & N &N &N \\
$9$ & N & N & N & N & {\bf Y} & N & N & N & N &{\bf Y} &{\bf Y} \\
$10$ & N & N & N & N & N & N & N & N & N &N & N \\
$11$ & N & N & N & N & N & N & N & N & N &{\bf Y} & N\\
$12$ & N & N & N & N & N & N & N & N & N &N & N\\
$13\le u \le 20$ & N & N & N & N & N & N & N &N & N &N & N\, $\scriptstyle{u=13}$ \\
\hline
\end{tabular}
\end{center}
\vskip .1in
\caption{\small Schur positivity of $R_{u,n}, 0\le u\le 20.$}
\label{tab:u-Data}
\end{table}

\noindent\textbf{Acknowledgement:} John Shareshian was supported by NSF grant DMS 1518389.

The authors are very grateful to the referee for a careful reading of the paper, and many valuable comments and suggestions for improved exposition.
%
%

\end{document}